% Final revised version, September 2013

\documentclass[11pt,amscd]{amsart}

\usepackage{amssymb}
\usepackage[english]{babel}

\textwidth=15truecm \textheight=22.7truecm
\oddsidemargin= 0.5cm 
\evensidemargin= 0.5cm
\topmargin = -0.1cm
\footskip =1cm
\parskip = 0.05cm
\pagestyle{plain}

\newtheorem{theorem}{Theorem}[section]
\newtheorem{prop}[theorem]{Proposition}
\newtheorem{lemma}[theorem]{Lemma} 
\newtheorem{cor}[theorem]{Corollary}

\theoremstyle{definition}
\newtheorem{ex}[theorem]{Example}

\theoremstyle{remark} 
\newtheorem*{remark*}{Remark}

\newcommand{\tref}[1]{Theorem~\ref{#1}}
\newcommand{\pref}[1]{Proposition~\ref{#1}}
\newcommand{\lref}[1]{Lemma~\ref{#1}}
\newcommand{\cref}[1]{Corollary~\ref{#1}}

\newcommand{\eref}[1]{Equation~\eqref{#1}}
\newcommand{\exref}[1]{Example~\ref{#1}}

\renewcommand{\theenumi}{\alph{enumi}}

\numberwithin{equation}{section}
    
\newcommand{\NN}{{\mathbb N}} 
\newcommand{\ZZ}{{\mathbb Z}}
\newcommand{\QQ}{{\mathbb Q}}
\newcommand{\RR}{{\mathbb R}}
\newcommand{\w}{{\mathbf w}}
\newcommand{\m}{{\mathfrak m}}

\newcommand{\upto}{,\ldots,}
\newcommand{\Aut}{\operatorname{Aut}}
\newcommand{\LT}{\operatorname{LT}}
\newcommand{\LF}{\operatorname{LF}}
\newcommand{\lex}{\operatorname{lex}}
\newcommand{\height}{\operatorname{ht}}
\newcommand{\Spec}{\operatorname{Spec}}
\newcommand{\ini}{\operatorname{in}}
\newcommand{\bigheight}{\operatorname{bight}}

\begin{document}

\title{Krull dimension and Monomial Orders}
\author{Gregor Kemper}
\address{Technische Universi\"at M\"unchen, Zentrum Mathematik - M11,
Boltzmannstr. 3, 85748 Garching, Germany}
\email{kemper@ma.tum.de}

\author{Ngo Viet Trung}
\address{Institute of Mathematics, Vietnam Academy of Science and Technology, 18 Hoang Quoc Viet, 10307 Hanoi, Vietnam}
\email{nvtrung@math.ac.vn}

\subjclass{13A02, 12A30, 13P10}
\keywords{Krull dimension, weight order, monomial order, independent sequence, analytically independent, associated graded ring, Jacobson ring, subfinite algebra}

%\date{}

\begin{abstract}
We introduce the notion of independent sequences with respect to a monomial order 
by using the least terms of polynomials vanishing at the sequence. 
Our main result shows that the Krull dimension of a Noetherian ring is equal to the supremum of the length of independent sequences.	
%This result can be used to compute the Krull dimension effectively.
The proof has led to other notions of independent sequences, which have interesting applications. 
For example, we can show that $\dim R/0:J^\infty$ is the maximum number of analytically independent elements in an arbitrary ideal $J$ of a local ring $R$ and that $\dim B \le \dim A$ if $B \subset A$ are (not necessarily finitely generated) subalgebras of a finitely generated algebra over a Noetherian Jacobson ring. 
\end{abstract}

\maketitle

\section*{Introduction}

Let $R$ be an arbitrary Noetherian ring, where a ring is always assumed to be commutative with identity.
The aim of this paper is to characterize the Krull dimension $\dim R$
by means of a monomial order on polynomial rings over $R$. We are
inspired of a result of Lombardi  in \cite{Lombardi:2002} (see also
Coquand and Lombardi \cite{Coquand-Lombardi:03},
\cite{Coquand-Lombardi:05}) which says that for a positive integer
$s$, $\dim R < s$ if and only if for every sequence of elements $a_1
\upto a_s$ in $R$, there exist nonnegative integers $m_1 \upto m_s$ and
elements $c_1 \upto c_s \in R$ such that 
$$a_1^{m_1} \cdots a_s^{m_s}+c_1a_1^{m_1 + 1}  + c_2a_1^{m_1}a_2^{m_2+1} +  \cdots + c_sa_1^{m_1} \cdots a_{s-1}^{m_{s-1}} a_s^{m_s +1} =  0.$$
This result has helped to develop a constructive theory for the Krull dimension  \cite{Coquand-Lombardi-Roy}, \cite{Coquand-Ducos-Lombardi-Quitte}, \cite{Ducos:09}.\par

The above relation means that $a_1 \upto a_s$ is a solution of the polynomial
$$x_1^{m_1} \cdots x_s^{m_s}+c_1x_1^{m_1 + 1}  + c_2x_1^{m_1}x_2^{m_2+1} +  \cdots + c_sx_1^{m_1} \cdots x_{s-1}^{m_{s-1}} x_s^{m_s +1}.$$
The least term of this polynomial with respect to the lexicographic
order is the monomial $x_1^{m_1} \cdots x_s^{m_s}$, which has the coefficient~$1$. 
This interpretation leads us to introduce the following notion.

Let~$\prec$ be a monomial order on the polynomial ring 
$R[x_1,x_2,\ldots]$ with infinitely many variables.  For every polynomial
$f$ we write $\ini_\prec(f)$ for the least term of $f$ with respect to
$\prec$.  Let $R[X] = R[x_1 \upto x_s]$. We call $a_1 \upto a_s \in R$ a {\em
  dependent sequence} with respect to~$\prec$ if there exists $f \in
R[X]$ vanishing at $a_1 \upto a_s$ such that the coefficient of
$\ini_\prec(f)$ is invertible. Otherwise, $a_1 \upto a_s$ is called an {\em independent
  sequence} with respect to~$\prec$. \par
 
Using this notion, we can reformulate Lombardi's result as $\dim R <
s$ if and only if  every sequence of elements $a_1 \upto a_s$ in $R$
is dependent with respect to the lexicographic order.  Out of
this reformulation arises the question whether one can replace the
lexicographical monomial order by other monomial orders.  
The proof of Lombardi  does not reveal how one can relate an arbitrary
monomial order to the Krull dimension of the ring. We will give a
positive answer to this question by proving
that  $\dim R$ is the supremum of the length of independent sequences for
 an arbitrary monomial order. This follows from \tref{monomial
   order 1} of this paper, which in fact strengthens the above statement.
 As an immediate consequence,  we obtain other algebraic identities between elements of $R$ than in Lombardi's result.
Although our results are not essentially computational, the independence conditions can often be treated by computer calculations. For instance, using a short program
written in MAGMA \cite{Bosma:1997}, the first author tested millions of examples which led to the conjecture 
that the above question has a positive answer \cite{Kemper:2011}.  
The proof combines techniques of Gr\"obner basis theory and the theory of associated graded rings of filtrations. 
It has led to other notions of independent sequences which are of independent interest, as we shall see below. \par

Our idea is to replace the monomial order $\prec$ by a weighted degree on the monomials.
Given  an infinite sequence $\w$ of positive integers $w_1,w_2,\ldots$, 
we may consider $R[x_1,x_2,\ldots]$ as a weighted graded ring with $\deg x_i = w_i$, $i = 1,2,\ldots$.
For every polynomial~$f$,  we write $\ini_\w(f)$ for the weighted homogeneous part of~$f$ of least degree.
We call $a_1 \upto a_s \in R$ a {\em weighted independent sequence} with respect to $\w$ 
if every coefficient of $\ini_\w(f)$ is not invertible for all polynomials $f \in R[X]$ vanishing at $a_1 \upto a_s$.
Otherwise, $a_1 \upto a_s$ is called a {\em weighted dependent sequence} with respect to $\w$.
We will see that if $R$ is a local ring and $w_i = 1$ for all $i$, the sequence $a_1 \upto a_s$ is weighted independent if and only if 
the elements $a_1 \upto a_s$ are analytically independent, a basic notion in the theory of local rings.
That is the reason why we use the terminology independent sequence for the above notions. \par

Let $Q = (x_1-a_1 \upto x_s-a_s)$ be the ideal of polynomials of $R[X]$ vanishing at $a_1 \upto a_s$.
Let $\ini_\prec(Q)$ and  $\ini_\w(Q)$ denote the ideals of $R[X]$
generated by the polynomials $\ini_\prec(f)$ and $\ini_\w(f)$, $f \in Q$. We want to find a weight sequence $\w$ such that $\ini_\prec(Q) = \ini_\w(Q)$. It is well known in Gr\"obner basis theory that this can be done if $\ini_\prec(Q)$ and  $\ini_\w(Q)$ were the largest term or the part of largest degree of $f$.
In our setting we can solve this problem only if $\prec\ $ is Noetherian,  that is,  if every monomial has only a
finite number of smaller monomials. In this case, $a_1,...,a_s$ is independent with respect to $\w$ if and only if it is independent with respect to $\prec\ $. If $\prec\ $ is not Noetherian, we can still find a Noetherian monomial order $\prec'$ such that 
if $a_1,...,a_s$ is independent with respect to $\prec$, then $a_1a_i,...,a_sa_i$ is independent with respect to $\prec'$ for some index $i$.
By this way, we can reduce our investigation on the length of independent sequences to the weighted graded case. \par

We shall see that for every weight sequence $\w$, $\ini_\w(Q)$ is the defining ideal of  the associated graded ring of certain filtration of $R$. 
Using properties of this associated graded ring we can show that the length of a weighted independent sequence is bounded above by $\dim R$, and that $a_1 \upto a_s$ is a weighted independent sequence if $\height(a_1 \upto a_s) = s$.
From this it follows that $\dim R$ is the supremum of the length of independent sequences with respect to $\w$. This is formulated in more detail in \tref{weight1} of this paper.
Furthermore, we can also show that $\dim R/\cup_{n \ge 1}(0:J^n)$ is the supremum of the length
of weighted independent sequences in a given ideal $J$. If $R$ is a local ring, 
this gives a characterization for the maximum number of analytically independent elements in $J$. \par
 
Since our results for independent sequences with respect to a monomial
order and for weighted independent sequences are analogous, 
one may ask whether there is a common generalization.
We shall see that there is a natural class of binary relations on the monomials
which cover both monomial orders and weighted degrees and 
for which the modified statements of the above results still hold. 
We call such a relation a {\em monomial preorder}. 
The key point is to show that a monomial preorder~$\prec$ can be
approximated by a weighted degree sequence~$\w$.
This is somewhat tricky because~$\w$ has to be chosen such that incomparable monomials with respect to~$\prec$ 
have the same weighted degree. Since monomial preorders are not as strict as monomial orders, 
these results may find applications in computational problems. \par
 
For an algebra over a ring, we can extend the definition of independent sequences to give a generalization of the transcendence degree.
Let $A$ be an algebra over $R$. Given  a  monomial preorder~$\prec$, we say that
a sequence $a_1 \upto a_s$ of elements of $A$ is independent over $R$ 
with respect to~$\prec$ if for every polynomial $f \in R[X]$ 
vanishing at $a_1 \upto a_n$, no coefficient of $\ini_\prec(f)$ is invertible in $R$. 
If $R$ is a field, this is just the usual notion of algebraic independence.  
In general, $\dim A$ is not the supremum of the length of independent sequences over $R$. 
However, if $R$ is a Jacobson ring and $A$ a subfinite $R$-algebra, that is, a subalgebra of a finitely generated  $R$-algebra,
we show that $\dim A$ is the supremum of the length of independent sequences with respect to $\prec$. 
So we obtain a generalization of the fundamental result that the
transcendence degree of a finitely generated algebra over a field
equals its Krull dimension.  Our result has the interesting
consequence that the Krull dimension cannot increase if one passes
from a subfinite algebra over a Noetherian Jacobson ring to a
subalgebra. For instance, if $H \subseteq G \subseteq \Aut(A)$ are groups of automorphisms of a finitely generated $\ZZ$-algebra $A$, then  
  \[
  \dim\left(A^G\right) \le \dim\left(A^H\right),
  \]
  even though the invariant rings need not be finitely generated. 
We also show that the above properties characterize Jacobson rings. \par

The paper is organized as follows. In Sections 1 and 2 we investigate weighted independent sequences and independent sequences with respect to a monomial order. The extensions of these notions for monomial preorders and for algebras over a Jacobson ring will be treated in Sections 3 and 4, respectively. 
\par

We would like to mention that there exists an earlier
version of this paper, titled ``The Transcendence Degree over a Ring'' and authored by the
first author~\cite{Kemper:2011}. This earlier version will not be published since its
results have merged into the present version.

The authors wish to thank Peter Heinig for bringing Coquand and
Lombardi's article \cite{Coquand-Lombardi:05} to their attention,
which initiated our investigation. They also thank Jos\'e Giral, Shiro Goto, J\"urgen Kl\"uners, Gerhard Pfister, Lorenzo Robbiano, Keiichi Watanabe for sharing their expertise, and the referee for pointing out that Lemma 1.6 can be found in \cite{B-H}. The second author is grateful to the Mathematical Sciences Research Institute at Berkeley for its support
to his participation to the Program Commutative Algebra  2012-2013, when part of this paper was written down. He is supported by a grant of the National Foundation for Sciences and Technology Development of Vietnam.

\section{Weighted independent sequences} \label{1sWeighted}

In this section we will prove some basic properties of weighted
independent sequences and our aim is to show that the Krull dimension
is the supremum of the length of weighted independent sequences.

Throughout this paper, let $R$ be a Noetherian ring. 
Let $a_1 \upto a_s$ be a sequence of nonzero elements of $R$, which are not invertible.
Note that an element of $R$ is weighted dependent if it is zero or
invertible. \par

First, we shall see that weighted independent sequences are a generalization of analytically independent elements.
Recall that if $R$ is a local ring, the elements $a_1,\ldots,a_s$ are called {\em analytically independent} if  every homogeneous polynomial vanishing at $a_1 \upto a_s$ has all its coefficients in the maximal ideal, which means that they are not invertible. \par

Let $\w = 1,1,\ldots$, the weight sequence with all $w_i = 1$.  The weighted degree in this case is the usual degree.  
Hence $\ini_\w(f)$ is the homogeneous part of smallest degree of a
polynomial~$f$.
Thus, $a_1 \upto a_s$ is analytically dependent if there exists a homogeneous polynomial vanishing at $a_1 \upto a_s$
which has an invertible coefficient.

\begin{ex} \label{1exZ}
 Let $a,b$ be two arbitrary integers. Since the greatest common divisor of $a^2$ and $b^2$
  divides the product $a b$, there exist $c,d \in \ZZ$ such
  that $a b = c a^2 + d b^2$. This relation shows that $a,b$ is a
  weighted dependent sequence with respect to $\w = 1,1,\ldots$
\end{ex}
      
Set $R[X] = R[x_1 \upto x_s]$. Let $f \in R[X]$ be an arbitrary
polynomial vanishing at $a_1 \upto a_s$ and $g = \ini_\w(f)$, where
$\w = 1,1,\ldots$. Write every term $u$ of $f$ with $\deg u > \deg g$
in the form $u = hv$, where $v$ is a monomial with $\deg v = \deg g$,
and replace $u$ by the term $h(a_1 \upto a_s)v$.  Then we obtain a homogeneous polynomial of the
form $g + a_1g_1 + \cdots + a_sg_s$ vanishing at $a_1 \upto a_s$.
If $R$ is a local ring, the coefficients of $g$ are
not invertible if and only if the coefficients of $g + a_1g_1 + \cdots
+ a_sg_s$ are not invertible. Hence $a_1 \upto a_s$
is a weighted independent sequence if and only if $a_1 \upto a_s$ are analytically independent. \par
 
Unlike analytically independent elements, the notion of weighted independent sequences depends on the order of the elements if the weight sequence $\w$ contains some distinct numbers.

\begin{ex} \label{not permute}%
  Let $R = K[u,v]$ be a polynomial ring in two indeterminates over a
  ring $K$.  The sequence $uv, v$ is dependent with respect to the
  weights $1,2$ because $x_1 - ux_2$ vanishes at $uv, v$ and $\ini_\w(x_1 - ux_2) =
  x_1$.  On the other hand, the sequence $v,uv$ is independent with
  respect to the same weights. To see this let $f= (x_1-v)g +
  (x_2-uv)h$ be an arbitrary polynomial of $R[x_1,x_2]$ vanishing at
  $v,uv$.  If $vg +uvh \neq 0$, $\ini_\w(f) = -\ini_\w(vg +uvh)$,
  whose coefficients are divided by $v$, hence not invertible.  If $vg
  +uvh = 0$, $g = uh$ and $\ini_\w(f) = \ini_\w(x_1uh +x_2h) =
  \ini_\w(x_1uh)$ since $\deg x_1 = 1 < 2 = \deg x_2$.  All
  coefficients of $\ini_\w(x_1uh)$ are divided by $u$, hence not
  invertible.
\end{ex}

Let $\w$ be an arbitrary weight sequence. 
Let $Q = (x_1-a_1 \upto x_s-a_s)$, the ideal of polynomials of $R[X]$ vanishing at $a_1 \upto a_s$.
Let $C$ be the set of the coefficients of all polynomials $\ini_\w(f)$, $f \in Q$. 
It is easy to see that $C$ is an ideal. 
Therefore, $a_1,\ldots,a_s$ is a weighted independent sequence with respect to $\w$
if and only if $C$ is a proper ideal of $R$.  Using this characterization, we obtain 
the following property of weighted independent sequences under localization.

\begin{prop} \label{local}%
  The sequence $a_1,\ldots,a_s$ is weighted independent if and only if
  there is a prime $P$ of $R$ such that $a_1,\ldots,a_s$ is weighted
  independent in $R_P$.
\end{prop}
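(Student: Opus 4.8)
The plan is to reduce everything to the coefficient-ideal description of weighted independence recalled just before the statement. Write $\phi\colon R \to R_P$ for the localization map, extended coefficientwise to $\phi\colon R[X] \to R_P[X]$, let $C \subseteq R$ be the ideal of coefficients of the polynomials $\ini_\w(f)$ with $f \in Q$, and let $C_P \subseteq R_P$ be the analogous ideal formed from the polynomials $g \in Q R_P[X]$. By that description, $a_1 \upto a_s$ is weighted independent in $R$ if and only if $C \ne R$, and weighted independent in $R_P$ if and only if $C_P \ne R_P$. So the whole proposition reduces to proving $C_P = C R_P$, where $C R_P$ denotes the extension of $C$ to $R_P$. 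Granting this, independence in $R$ means $C$ is contained in some maximal ideal $\m$, whence $C R_\m \ne R_\m$ and so $C_\m \ne R_\m$; conversely, independence in some $R_P$ gives $C R_P = C_P \ne R_P$, hence $C \subseteq P$ and $C \ne R$. Here I use only that a proper ideal lies in a maximal ideal and that $I R_P = R_P$ if and only if $I \not\subseteq P$.

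The inclusion $C R_P \subseteq C_P$ is the easy half. Given $c \in C$, say $c$ is a coefficient of $\ini_\w(f)$ with $f \in Q$, I pass to $\phi(f) \in Q R_P[X]$. If $\phi(\ini_\w(f)) \ne 0$ then this polynomial equals $\ini_\w(\phi(f))$ and $c/1$ is one of its coefficients, so $c/1 \in C_P$; if $\phi(\ini_\w(f)) = 0$ then $c/1 = 0 \in C_P$ anyway. As $C_P$ is an ideal, $C R_P \subseteq C_P$ follows.

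The hard part will be the reverse inclusion $C_P \subseteq C R_P$, and the obstacle is exactly that $\ini_\w$ need not commute with $\phi$: localization may annihilate the lowest-degree weighted-homogeneous part of a polynomial and thereby raise its initial degree. To handle this, take $g \in Q R_P[X]$ and write $g = \sum_i (x_i - a_i) h_i$ with $h_i \in R_P[X]$. Clearing denominators by a single $t \in R \setminus P$ produces $\phi(t)\,g = \phi(F)$ for some $F = \sum_i (x_i - a_i) H_i \in Q$; since $\phi(t)$ is a unit in $R_P$, we get $\ini_\w(g) = \phi(t)^{-1}\ini_\w(\phi(F))$, so it suffices to place the coefficients of $\ini_\w(\phi(F))$ in $C R_P$. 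Decompose $F = \sum_{j \ge d} F_j$ into its weighted-homogeneous parts and let $e$ be least with $\phi(F_e) \ne 0$, so that $\ini_\w(\phi(F)) = \phi(F_e)$. For each $j$ with $d \le j < e$ every coefficient of $F_j$ lies in $\ker\phi$, so a single $u \in R \setminus P$, namely a finite product of annihilators of these finitely many coefficients, satisfies $u F_j = 0$ for all such $j$. Then $uF \in Q$ has lowest nonvanishing part $u F_e$, which is nonzero because $\phi(u F_e) = \phi(u)\phi(F_e) \ne 0$; hence $\ini_\w(uF) = u F_e$ and the coefficients of $u F_e$ lie in $C$. Applying $\phi$ and dividing by the unit $\phi(u)$ shows the coefficients of $\phi(F_e) = \ini_\w(\phi(F))$ lie in $C R_P$, and therefore so do those of $\ini_\w(g)$. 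This yields $C_P \subseteq C R_P$ and completes the identification $C_P = C R_P$.
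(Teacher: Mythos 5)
Your proof is correct and follows essentially the same route as the paper: both reduce the statement to the identification of the coefficient ideal of the localized situation with the localization $C R_P$ of $C$, and then use that a proper ideal lies in a maximal ideal and that $C R_P = R_P$ iff $C \not\subseteq P$. The only difference is that the paper asserts this identification in one sentence, whereas you supply the verification (clearing denominators and killing the low-degree parts annihilated by $R \setminus P$), which is a faithful filling-in of the step the paper treats as evident.
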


\begin{proof}
  If $a_1,\ldots,a_s$ is a weighted independent sequence, then $C$ is
  contained in a maximal ideal $P$ of $R$.  Since $Q_P$ is the ideal
  of the polynomials in $R_P[X]$ vanishing at $a_1,\ldots,a_s$, $C_P$
  is the set of the coefficients of all polynomials $\ini_\w(f)$, $f
  \in Q_P$. Since $C_P$ is a proper ideal of $R_P$, $a_1,\ldots,a_s$
  is a weighted independent sequence in $R_P$.

  Conversely, if $a_1,\ldots,a_s$ is a weighted independent sequence
  in $R_P$ for some prime $P$ of $R$, then $C_P$ is a proper ideal and
  so is $C$, too.  Therefore, $a_1,\ldots,a_s$ is a weighted
  independent sequence in $R$.
\end{proof}

Let $\ini_\w(Q)$ denote the ideal in $R[X]$ generated by the polynomials $\ini_\w(f)$, $f
\in Q$. Then $C$ is also the set of the coefficients of all polynomials in $\ini_\w(Q)$.
Therefore, weighted independence is a property of $\ini_\w(Q)$.
We shall see that $R[X]/\ini_\w(Q)$ is isomorphic to the associated graded ring of certain filtration of $R$.\par
 
Let $S$ denote the subring $R[a_1t^{w_1},\ldots,a_st^{w_s},t^{-1}]$ of
the Laurent polynomial ring $R[t,t^{-1}]$.  Since $S$ is a graded
subring of $R[t, t^{-1}]$, we may write $S = \oplus_{n \in
  \ZZ}I_nt^n$. It is easy to see that
\begin{equation} \label{filtration}
I_n = \sum_{\scriptsize \begin{array}{c} m_1w_1+\cdots+m_sw_s  \ge n\\
m_1,\ldots,m_s \ge 0\end{array}} a_1^{m_1}\cdots
a_s^{m_s}R
\end{equation}
for $n \ge 0$ and $I_n = R$ for $n < 0$. The ideals $I_n$, $n \ge 0$,
form a filtration of $R$. In the case $w_1 = \cdots = w_s =1$, we have
$I_n = I^n$, where $I := (a_1 \upto a_s)$. 
So we may consider $S$ as the extended Rees algebra of this filtration.
\par

Let $G = S/t^{-1}S$. Then $G \cong \oplus_{n \ge 0}I_n/I_{n+1}$. In
other words, $G$ is the associated graded ring of the above
filtration.

\begin{lemma} \label{presentation}
$G \cong R[X]/\ini_\w(Q)$.
\end{lemma}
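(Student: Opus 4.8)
The plan is to construct an explicit graded ring homomorphism $\varphi\colon R[X]\to G$ and show that its kernel is exactly $\ini_\w(Q)$. First I would set up the grading: give $R[X]=R[x_1,\ldots,x_s]$ the weighted grading with $\deg x_i=w_i$, and observe that $G=\oplus_{n\ge0}I_n/I_{n+1}$ is graded with the degree-$n$ piece $I_n/I_{n+1}$. The natural candidate for $\varphi$ sends each variable $x_i$ to the residue class of $a_it^{w_i}\in S$ in the degree-$w_i$ component $I_{w_i}/I_{w_i+1}$ of $G$. Since a monomial $x_1^{m_1}\cdots x_s^{m_s}$ has weighted degree $d:=\sum m_iw_i$, its image is the class of $a_1^{m_1}\cdots a_s^{m_s}t^d$ in $I_d/I_{d+1}$, which by the formula \eqref{filtration} for $I_n$ is a legitimate degree-$d$ element of $G$. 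Extending $R$-linearly gives a well-defined graded $R$-algebra homomorphism, and it is surjective because the classes of the $a_1^{m_1}\cdots a_s^{m_s}t^d$ generate each $I_d/I_{d+1}$ as an $R$-module, again directly from \eqref{filtration}.

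The heart of the argument is identifying $\ker\varphi$ with $\ini_\w(Q)$, and I would prove the two inclusions separately. For $\ini_\w(Q)\subseteq\ker\varphi$, take $f\in Q$, write $f=\sum_{d\ge d_0}f_d$ as a sum of weighted-homogeneous parts with $d_0$ the least degree occurring, so $\ini_\w(f)=f_{d_0}$. Evaluating $f$ at $a_1,\ldots,a_s$ gives $0$, and since each monomial of $f_d$ of weighted degree $d$ evaluates into $I_d$, the vanishing relation $\sum_d f_d(a)=0$ forces the degree-$d_0$ contribution $f_{d_0}(a)=\ini_\w(f)(a)$ to lie in $I_{d_0+1}$; this is precisely the statement that $\varphi(\ini_\w(f))=0$ in $I_{d_0}/I_{d_0+1}$. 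For the reverse inclusion $\ker\varphi\subseteq\ini_\w(Q)$, I would take a weighted-homogeneous $g\in R[X]$ of degree $d$ with $\varphi(g)=0$, meaning $g(a)\in I_{d+1}$. By \eqref{filtration}, $g(a)$ equals an $R$-combination of products $a_1^{m_1}\cdots a_s^{m_s}$ with $\sum m_iw_i\ge d+1$; subtracting the corresponding monomials $x_1^{m_1}\cdots x_s^{m_s}$ (all of weighted degree $\ge d+1$) from $g$ produces a polynomial $f$ with $\ini_\w(f)=g$ and $f(a)=0$, so $f\in Q$ and $g=\ini_\w(f)\in\ini_\w(Q)$. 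Since $\ini_\w(Q)$ is generated by weighted-homogeneous elements, it suffices to treat homogeneous $g$, so this completes both inclusions.

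The step I expect to be the main obstacle is the careful bookkeeping in the reverse inclusion: I must check that the adjustment of $g$ by higher-degree monomials can be carried out so that the resulting $f$ genuinely vanishes at $a$ while keeping $\ini_\w(f)=g$ intact. The subtlety is that one needs the membership $g(a)\in I_{d+1}$ to be witnessed by an \emph{explicit} $R$-linear expression in the generating products of $I_{d+1}$, which \eqref{filtration} supplies, and one must confirm that replacing these products by monomials does not accidentally introduce a lower-degree term that would change the initial form. Because every monomial used in the correction has weighted degree strictly greater than $d$, the least-degree part is untouched, so $\ini_\w(f)=g$ as required. Once both inclusions are established, the induced map $R[X]/\ini_\w(Q)\to G$ is a graded isomorphism, which is the claim of the lemma.
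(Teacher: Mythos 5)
Your proof is correct, but it takes a more direct route than the paper. You construct a graded surjection $\varphi\colon R[X]\to G$ sending $x_i$ to the class of $a_i$ in $I_{w_i}/I_{w_i+1}$ and compute its kernel, whereas the paper presents $S$ as $R[X,y]/\Im$ with $\deg y=-1$, obtains $G\cong R[X,y]/(\Im,y)\cong R[X]/\left((\Im,y)\cap R[X]\right)$, and then proves $(\Im,y)\cap R[X]=\ini_\w(Q)$. The kernel computations are parallel: in both arguments the key move is to split $f\in Q$ into weighted-homogeneous parts $g+g_1+\cdots+g_n$ and observe that $f(a_1\upto a_s)=0$ forces $g(a_1\upto a_s)\in I_{d_0+1}$, and conversely to lift a homogeneous relation to an element of $Q$ by correcting with monomials of strictly higher weighted degree. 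The paper encodes this bookkeeping in the homogenizing variable $y$, whose powers record the degree shifts, while you carry it out by hand. Your version avoids the auxiliary ring $R[X,y]$ and the elimination step, at the cost of having to verify surjectivity of $\varphi$ explicitly, which you do correctly from \eqref{filtration} (the generators $a_1^{m_1}\cdots a_s^{m_s}$ of $I_d$ with $\sum m_iw_i>d$ die in $I_d/I_{d+1}$, so the classes with $\sum m_iw_i=d$ suffice); the paper gets surjectivity for free from the presentation of $S$ and simultaneously exhibits the Rees algebra $S$ itself as a quotient, in keeping with the framework of the section. One small repair: your reduction to weighted-homogeneous $g$ in the inclusion $\ker\varphi\subseteq\ini_\w(Q)$ should be justified by the fact that $\ker\varphi$ is a homogeneous ideal because $\varphi$ is a graded map, not by the homogeneity of the generators of $\ini_\w(Q)$; with that observation every element of $\ker\varphi$ decomposes into homogeneous elements of $\ker\varphi$ and your argument applies.
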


\begin{proof}
  Let $y$ be a new variable and consider the polynomial ring $R[X,y]$
  as weighted graded with $\deg x_i = w_i$ and $\deg y = -1$.  Then we
  have a natural graded map $R[X,y] \to S$, which sends $x_i$ to
  $a_it^{w_i}$, $i = 1,\ldots,s$, and $y$ to $t^{-1}$.  Let $\Im$ denote
  the kernel of this map. Then $S \cong R[X,y]/\Im$, hence
  \[
  G \cong R[X,y]/(\Im,y) \cong R[X]/\left((\Im,y) \cap R[X]\right).
  \]
  It remains to show that $(\Im,y) \cap R[X] = \ini_\w(Q).$ \par

  Let $g$ be an arbitrary element of $(\Im,y) \cap R[X]$. Then $g = F
  + Hy$ for some polynomials $F \in \Im$ and $H \in R[X,y]$. Without
  loss of generality we may assume that $g$ is nonzero and that $g$
  and $F$ are weighted homogeneous. Then $F$ has the form $F = g +
  g_1y + \cdots + g_ny^n$, where $g_i$ is a weighted homogeneous
  polynomial of $R[X]$ with $\deg g_i = \deg g +i$, $i =
  1,\ldots,n$. Set $f = g + g_1 + \cdots + g_n$. We have
  $f(a_1,\ldots,a_s)t^{\deg g} = F(a_1t^{w_1},\ldots,a_st^{w_s},
  t^{-1}) = 0$.  Therefore, $f(a_1,\ldots,a_s) = 0$ and hence $f \in
  Q$. Since $g = \ini_\w(f)$, $g \in \ini_\w(Q)$. \par

  Conversely, every polynomial $f \in Q$ can be written in the form $f
  = g + g_1 + \cdots + g_n$, where $g = \ini_\w(f)$ and $g_i$ is a
  weighted homogeneous polynomial with $\deg g_i = \deg g + i$, $i =
  1,\ldots,n$.  Set $F = g + g_1y + \cdots + g_ny^n$. Then
  $F(a_1t^{w_1},\ldots,a_st^{w_s}, t^{-1}) = f(a_1,\ldots,a_s)t^{\deg g} =
  0$. Therefore $F \in \Im$ and hence 
$\ini_\w(f) = F - (g_1 + \cdots   + g_ny^{n-1})y \in (\Im,y).$
\end{proof}

\begin{cor} \label{bound}
If $a_1,...,a_s$ is a weighted independent sequence, then $s \le \dim G$. 
\end{cor}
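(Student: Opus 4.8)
The plan is to exploit the characterization of weighted independence in terms of the coefficient ideal $C$ together with the presentation $G \cong R[X]/\ini_\w(Q)$ furnished by \lref{presentation}. Recall from the discussion preceding \pref{local} that $a_1,\ldots,a_s$ is weighted independent precisely when $C$, the set of all coefficients of the polynomials $\ini_\w(f)$ with $f \in Q$, is a proper ideal of $R$. So I would begin by choosing a maximal ideal $\m$ of $R$ with $C \subseteq \m$.

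Next I would observe that this forces $\ini_\w(Q) \subseteq \m R[X]$. Indeed, as noted in the text, $C$ is also the coefficient set of \emph{all} polynomials in $\ini_\w(Q)$, so every element of $\ini_\w(Q)$ has its coefficients in $C \subseteq \m$; and $\m R[X]$ is exactly the ideal of polynomials whose coefficients all lie in $\m$. Consequently there is a surjection
\[
G \cong R[X]/\ini_\w(Q) \twoheadrightarrow R[X]/\m R[X] \cong (R/\m)[x_1,\ldots,x_s].
\]
The target is a polynomial ring in $s$ variables over the field $R/\m$, so its Krull dimension equals $s$.

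Finally I would invoke the standard fact that a surjective ring homomorphism induces a closed embedding of spectra and therefore cannot increase Krull dimension, giving $\dim R[X]/\m R[X] \le \dim G$. Combining this with $\dim (R/\m)[x_1,\ldots,x_s] = s$ yields $s \le \dim G$, as desired. I expect no serious obstacle: the only point requiring care is the inclusion $\ini_\w(Q) \subseteq \m R[X]$, and this rests entirely on the already-recorded fact that $C$ is simultaneously the coefficient set of the generators $\ini_\w(f)$ and of every polynomial in the ideal they generate.
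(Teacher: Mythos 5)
Your proof is correct and follows essentially the same route as the paper's: both use the surjection from $G \cong R[X]/\ini_\w(Q)$ onto a polynomial ring in $s$ variables obtained from the properness of the coefficient ideal $C$. The only cosmetic difference is that you pass to a maximal ideal $\m \supseteq C$ to land in $(R/\m)[X]$ over a field, whereas the paper maps directly onto $(R/C)[X]$.
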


\begin{proof}
Since $\ini_\w(Q) \subseteq CR[X]$, there is a surjective map $R[X]/\ini_\w(Q) \to
R[X]/CR[X]$. Since $C$ is a proper ideal of $R$, $s \le  \dim R[X]/ CR[X]$ because
$R[X]/CR[X] \cong (R/C)[X]$, the polynomial ring
in $s$ variables over $R/C$. Thus,  $s \le \dim R[X]/\ini_\w(Q) = \dim G$.
\end{proof}

The following formula for $\dim G$ follows from a general formula for the dimension of the associated graded ring of a filtration \cite[Theorem 4.5.6(b)]{B-H}. This formula is a generalization of the well-known fact that $\dim G = \dim R$ 
if $R$ is a local ring and $G$ is the associated graded ring of an ideal (see Matsumura~\cite[Theorem~15.7]{Matsumura:86} or Eisenbud~\cite[Excercise 13.8]{eis}).

\begin{lemma} \label{associated} 
Let $I = (a_1 \upto a_s)$. Then
$$\dim G = \sup\{\height P \mid P \supseteq I\ \text{is a prime of}\ R\}.$$  
\end{lemma}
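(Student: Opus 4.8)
The plan is to deduce the formula from the general dimension formula for the associated graded ring of a filtration, \cite[Theorem 4.5.6(b)]{B-H}, after checking that the filtration $\{I_n\}$ of \eref{filtration} meets its hypotheses. First I would record that the filtration is well behaved: since $S = R[a_1t^{w_1}\upto a_st^{w_s},t^{-1}]$ is a finitely generated $R$-algebra it is Noetherian, and so is its nonnegative part $\bigoplus_{n\ge 0}I_nt^n = R[a_1t^{w_1}\upto a_st^{w_s}]$; thus $\{I_n\}$ has a Noetherian Rees algebra, which is the hypothesis behind the cited formula. Next I would identify the relevant data. Because every $w_i\ge 1$, a monomial $a_1^{m_1}\cdots a_s^{m_s}$ has weighted degree $\ge 1$ precisely when some $m_i>0$, so $I_1=(a_1\upto a_s)=I$ and hence $G_0=R/I_1=R/I$. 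Moreover $I^n\subseteq I_n\subseteq I$ for all $n\ge 1$, so $\sqrt{I_n}=\sqrt I$; this pins down the set of primes entering the formula as exactly $V(I)=\{P\in\Spec R\mid P\supseteq I\}$, and the formula then reads $\dim G=\sup\{\height P\mid P\supseteq I\}$, as claimed.

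The formula amounts to two inequalities, and I would make the mechanism explicit before invoking the general theorem. For the lower bound, fix a prime $P\supseteq I$ and localize at $P$. Forming the associated graded ring commutes with localization, so the associated graded ring $G_P$ of the filtration $\{(I_n)_P\}$ of $R_P$ is a localization of $G$, whence $\dim G\ge\dim G_P$. As $R_P$ is local with maximal ideal $PR_P\supseteq IR_P$, the local case of the formula gives $\dim G_P=\dim R_P=\height P$; this local case is the weighted analogue of the classical identity $\dim G=\dim R$ for the associated graded ring of an ideal in a local ring (\cite[Theorem 15.7]{Matsumura:86}). Taking the supremum over $P$ yields $\dim G\ge\sup\{\height P\mid P\supseteq I\}$. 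For the reverse inequality one takes a chain of primes of $G$ of maximal length; since $G$ is $\NN$-graded the computation reduces to homogeneous primes, each of which contracts along $G_0=R/I$ to a prime $P\supseteq I$, and one must bound the height of such a graded prime by $\height P$.

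The main obstacle is this reverse inequality: bounding the heights of the homogeneous primes of $G$ by the heights of their contractions to $R/I$ is exactly the content of the general dimension formula and uses the full machinery behind \cite[Theorem 4.5.6(b)]{B-H}. The delicate point is that the bound must come out independent of the weight sequence $\w$, even though $\w$ reshapes every ideal $I_n$ of the filtration. I therefore expect to invoke \cite[Theorem 4.5.6(b)]{B-H} directly for the upper bound rather than reprove it, having verified in the first step that $\{I_n\}$ is a filtration with Noetherian Rees algebra and $I_1=I$, so that the cited theorem applies and returns precisely $\sup\{\height P\mid P\supseteq I\}$.
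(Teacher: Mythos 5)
Your proposal matches the paper exactly: the paper gives no proof of this lemma at all, simply noting that it follows from the general dimension formula for the associated graded ring of a filtration in \cite[Theorem 4.5.6(b)]{B-H}, which is precisely the citation you reduce everything to after checking hypotheses. One small inaccuracy: when some $w_i>1$ the nonnegative part $\bigoplus_{n\ge 0}I_nt^n$ is strictly larger than $R[a_1t^{w_1}\upto a_st^{w_s}]$ (since $I_n$ is generated by the monomials of weighted degree $\ge n$, not $=n$), but it is still a finitely generated $R$-algebra (generated by the elements $a_it^j$ with $1\le j\le w_i$), so your Noetherianity check survives.
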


As a consequence, we always have $\dim G \le \dim R$. 
Together with  \cref{bound},  this implies that the length of a weighted independent sequence cannot exceed $\dim R$. 
Now we will show that there exist weighted independent sequences
of length $\height P$ for any maximal prime $P$ of $R$. \par

Let $\bigheight(I)$ denote the {\em big height} of $I$, that is, the
maximum height of the minimal primes over $I$.  

\begin{prop} \label{parameter}%
  Let $a_1,\ldots,a_s$ be elements of $R$ such that
  $\bigheight(a_1,\ldots,a_s) = s$. Then $a_1,\ldots,a_s$ is a
  weighted independent sequence with respect to every weight sequence
  $\w$.
\end{prop}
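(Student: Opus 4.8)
The plan is to reduce to the local case via \pref{local} and then reinterpret weighted independence as a statement about the fibre of the associated graded ring $G$ over the closed point. First I would use \pref{local}: it suffices to exhibit one prime $P$ for which $a_1 \upto a_s$ is weighted independent in $R_P$. Since $\bigheight(a_1 \upto a_s) = s$, I would take $P$ to be a minimal prime over $I := (a_1 \upto a_s)$ with $\height P = s$. Localizing at $P$, the ideal $IR_P$ is primary to the maximal ideal of $R_P$ (as $P$ is minimal over $I$), and $\dim R_P = \height P = s$. Replacing $R$ by $R_P$ and writing $\m$ for its maximal ideal, the task becomes: \emph{if $(R,\m)$ is local of dimension $s$ and $I$ is $\m$-primary, then $a_1 \upto a_s$ is weighted independent for every $\w$}. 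All the machinery of this section applies verbatim to $R_P$, so I may work in this local setting.

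Next I would translate independence into a dimension count. Put $k := R/\m$. By the characterization before \pref{local}, weighted independence is equivalent to $C \subseteq \m$, i.e.\ to $\ini_\w(Q) \subseteq \m R[X]$; this says exactly that the image $J$ of $\ini_\w(Q)$ in $k[X] = k[x_1 \upto x_s]$ is the zero ideal, equivalently $G/\m G \cong k[X]$. Suppose, for contradiction, that the sequence is weighted dependent, so $J \neq 0$. The degree-$0$ part of $\ini_\w(Q)$ is $\ini_\w(Q) \cap R = I$ (read off from the isomorphism $G \cong R[X]/\ini_\w(Q)$ of \lref{presentation} together with $G_0 = R/I_1 = R/I$), and $I \subseteq \m$, so $J$ has vanishing degree-$0$ part. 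Hence the nonzero homogeneous ideal $J$ contains a form of positive degree, a nonzerodivisor of the domain $k[X]$, giving
\[
\dim (G/\m G) \;=\; \dim k[X]/J \;\le\; s-1 .
\]

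The crux is then the identity $\dim(G/\m G) = \dim G = s$. On one hand, by \lref{associated} the only prime containing the $\m$-primary ideal $I$ is $\m$ itself, so $\dim G = \height \m = s$. On the other hand, $G$ is a positively graded Noetherian ring whose degree-$0$ piece $G_0 = R/I$ is an Artinian local ring with nilpotent maximal ideal $\m_0 = \m/I$. Since $R$ acts on $G$ through $R \to R/I$, we have $\m G = \m_0 G$, and nilpotence of $\m_0$ gives $(\m G)^N = \m_0^N G = 0$ for some $N$. Thus $\m G$ lies in the nilradical of $G$, hence in every prime, so passing to $G/\m G$ leaves the dimension unchanged: $\dim(G/\m G) = \dim G = s$. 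This contradicts the bound $\le s-1$, so the sequence is weighted independent in $R_P$, and \pref{local} returns the claim for $R$.

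I expect the main obstacle to be precisely this last identity $\dim(G/\m G) = \dim G$, i.e.\ that the fibre dimension of $G$ over the closed point is the full dimension. It is the weighted analogue of the classical fact that an $\m$-primary ideal has analytic spread equal to $\dim R$ (so that a system of parameters is analytically independent), and it hinges on $G_0 = R/I$ being Artinian local rather than merely local. A secondary point needing care is the reduction step: one should verify that localizing at the chosen height-$s$ minimal prime keeps $IR_P$ both primary to the maximal ideal and of the correct dimension, so that $\dim G = s$ can indeed be computed from \lref{associated}.
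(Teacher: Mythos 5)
Your proof is correct and follows essentially the same route as the paper: reduce to the local case via \pref{local}, get $\dim G = s$ from \lref{associated}, observe that $\m G$ is nilpotent (the paper shows $\m^r G = 0$ directly from $\m^r \subseteq I = I_1$ and $I_1 I_n \subseteq I_{n+1}$, which is the same mechanism as your $(\m G)^N = 0$), and derive the contradiction $\dim(G/\m G) \le s-1$ from a nonzero homogeneous ideal $J \subseteq k[X]$. The only difference is your extra (correct but unnecessary) verification that $J$ has trivial degree-zero part.
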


\begin{proof}
  Let $P$ be a minimal prime of $I = (a_1,\ldots,a_s)$ with $\height P
  = s$. By \pref{local}, $a_1,\ldots,a_s$ is a weighted
  independent sequence in $R$ if $a_1,\ldots,a_s$ is a weighted
  independent sequence in $R_P$. Therefore, we may assume that $R$ is
  a local ring and $a_1,\ldots,a_s$ is a system of parameters in
  $R$. In this case, $\dim G = s$ by \lref{associated}.

  Let~$\m$ be the maximal ideal of $R$.  There exists an integer~$r$
  such that $\m^r \subseteq I$. Since $I_1 = I$, $\m^r I_n \subseteq
  I_{n+1}$ for all $n$. Therefore, $\m^r G = 0$. Hence $\dim G/\m G =
  \dim G = s$.  Let $k = R/\m$. By \lref{presentation}, $G/\m G =
  R[X]/(\ini_\w(Q),\m) = k[X]/J$ for some ideal $J$ of $k[X]$. If
  $a_1,\ldots,a_s$ were weighted dependent, there would be a
  polynomial in $\ini_\w(Q)$ which has a coefficient not in $\m$,
  implying $J \neq 0$ and the contradiction $\dim (G/\m G) \le s-1$.
\end{proof}

Summing up, we obtain the following results on the Krull
dimension in terms of weighted independent sequences.

\begin{theorem} \label{weight1}%
  Let $R$ be a Noetherian ring and~$s$ a positive integer.
  \begin{enumerate}
  \item \label{weight1A} If $s \le \dim R$, there exists a sequence
    $a_1 \upto a_s \in R$ that is weighted independent with respect to
    every weight sequence.
  \item \label{weight1B} If $s > \dim R$, every sequence $a_1 \upto
    a_s \in R$ is weighted dependent with respect to every weight
    sequence.
  \end{enumerate}
\end{theorem}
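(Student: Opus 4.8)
The plan is to assemble \tref{weight1} from the machinery already in place, treating the two parts separately. The key ingredients are \cref{bound} (a weighted independent sequence has length at most $\dim G$), \lref{associated} (the formula $\dim G = \sup\{\height P \mid P \supseteq I\}$, which immediately gives $\dim G \le \dim R$), and \pref{parameter} (a sequence with $\bigheight(a_1\upto a_s)=s$ is weighted independent with respect to every weight sequence). Part (\ref{weight1B}) should follow almost formally: if $a_1\upto a_s$ were weighted dependent with respect to some $\w$, then by \cref{bound} and \lref{associated} we would have $s \le \dim G \le \dim R$, contradicting $s > \dim R$.

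For part (\ref{weight1A}), the task is to \emph{produce} a sequence of length $s \le \dim R$ that is weighted independent. The natural strategy is to find a prime chain realizing the dimension and extract from it a sequence whose big height equals~$s$, so that \pref{parameter} applies. Concretely, I would choose a prime $P$ of $R$ with $\height P = \dim R \ge s$ (passing, if needed, to a maximal prime where the dimension is attained), and then build elements $a_1\upto a_s$ generating an ideal of height exactly $s$ inside $R_P$. The classical tool here is prime avoidance: working in the local ring $R_P$ with a saturated chain of primes $P_0 \subsetneq P_1 \subsetneq \cdots \subsetneq P_s = PR_P$, one successively picks $a_i \in P_i$ avoiding all minimal primes of $(a_1\upto a_{i-1})$, which forces $\height(a_1\upto a_i) = i$ at each stage. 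This yields $\bigheight(a_1\upto a_s) = s$, so \pref{parameter} gives weighted independence with respect to every~$\w$, and \pref{local} transports this back from $R_P$ to $R$.

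The main obstacle is the existence of a sequence with $\bigheight = s$, i.e.\ turning a chain of primes into an honest system of parameters of the right height. One must be careful that big height (the maximum over minimal primes), rather than height or codimension, is what \pref{parameter} demands, and that the avoidance argument genuinely produces a sequence all of whose minimal primes sit at the top of the chain. The cleanest route is to work locally at $P$, where $a_1\upto a_s$ becomes part of a system of parameters, so that every minimal prime of $(a_1\upto a_s)R_P$ has height exactly $s$; then $\bigheight$ in $R_P$ is $s$, and \pref{local} lets us conclude weighted independence in $R$. I would write part (\ref{weight1A}) by first reducing to the local case, citing the standard fact that in a Noetherian local ring of dimension at least $s$ one can find $a_1\upto a_s$ forming part of a system of parameters, and then invoking \pref{parameter} and \pref{local}.

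\begin{proof}
  We first prove (\ref{weight1B}). Suppose $s > \dim R$ but some
  sequence $a_1 \upto a_s$ is weighted independent with respect to a
  weight sequence~$\w$. By \cref{bound} we have $s \le \dim G$, and by
  \lref{associated} together with the fact that heights of primes
  never exceed $\dim R$ we have $\dim G \le \dim R$. Hence $s \le \dim
  R$, a contradiction.

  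For (\ref{weight1A}), assume $s \le \dim R$ and choose a prime $P$ of
  $R$ with $\height P = \dim R \ge s$. In the Noetherian local ring
  $R_P$ of dimension $\height P \ge s$, there exist elements $a_1 \upto
  a_s$ forming part of a system of parameters, so that
  $\height(a_1\upto a_s)R_P = s$. Then every minimal prime of
  $(a_1\upto a_s)R_P$ has height $s$, whence $\bigheight(a_1\upto
  a_s)=s$ in $R_P$. By \pref{parameter}, $a_1\upto a_s$ is a weighted
  independent sequence in $R_P$ with respect to every weight sequence.
  By \pref{local}, $a_1\upto a_s$ is a weighted independent sequence in
  $R$ with respect to every weight sequence.
\end{proof}
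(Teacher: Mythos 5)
Your proof is correct and follows essentially the same route as the paper: part (\ref{weight1B}) is the identical combination of \cref{bound} and \lref{associated}, and part (\ref{weight1A}) reduces to \pref{parameter} via the standard construction of $s$ elements generating an ideal of height $s$ (the paper cites this fact directly for a prime $P$ of height exactly $s$, while you localize first and invoke \pref{local}, a step that \pref{parameter}'s own proof already performs). The only points to tighten are that the $a_i$ must be chosen in $R$ itself (take them in $P$ when doing the prime avoidance, not merely in $PR_P$), and that ``part of a system of parameters'' alone does not force $\height(a_1 \upto a_s) = s$ in a non-equidimensional local ring --- it is the avoidance of \emph{all} minimal primes at each step, as described in your plan, that guarantees every minimal prime of $(a_1\upto a_s)$ has height $s$.
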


\begin{proof}
  If $s \le \dim R$, there exists a prime $P$ in $R$ of height~$s$.  
It is a standard fact that there exists a sequence $a_1 \upto a_s \in P$ such that $P$ is a minimal prime of $(a_1 \upto a_s)$. 
Hence \eqref{weight1A} follows from \pref{parameter}. 
If $s > \dim R$, then $s > \dim G$ by Lemma \ref{associated}.
Hence \eqref{weight1B} follows from Corollary \ref{bound}.
\end{proof}

As a consequence, $\dim R$ is the supremum of the length of 
weighted independent sequences with respect to an arbitrary weight sequence. \par

\begin{remark*}
A maximal weighted independent sequence need not to have length $\dim R$.
To see that we consider a Noetherian ring that has a maximal ideal $P = (a_1,...,a_s)$ with $s = \height P < \dim R$. 
By Proposition \ref{parameter}, $a_1,\ldots,a_s$ is weighted independent with respect to every weight sequence $\w$.
It is maximal because any extended sequence $a_1,...,a_{s+1}$ with $a_{s+1} \not\in P$ is weighted dependent.
This follows from the fact $R = (a_1,...,a_{s+1})$, which implies that there is a polynomial $f$ of the form $1+ c_1x_1 + \cdots  + c_{s+1}x_{s+1}$ vanishing at  $a_1,...,a_{s+1}$ with $\ini_\w(f) = 1$.
\end{remark*}

Similarly, we can study weighted independent sequences in a given ideal
$J$ of $R$. Let $0:J^\infty = \cup_{m \ge
  0}0:J^m$. Note that $0:J^\infty$ is the intersection of all primary
components of the zero-ideal $0_R$ whose associated primes do not
contain $J$ and that $0:J^\infty = 0:J^m$ for $m$ large enough.

\begin{theorem} \label{weight2}%
  For every ideal $J \subseteq R$,  $\dim R/0:J^\infty$ is the supremum of the length
  of weighted independent sequences in $J$ with respect to an arbitrary weight sequence.
\end{theorem}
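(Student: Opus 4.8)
The plan is to reduce the whole statement to the ring $\bar R := R/(0:J^\infty)$ and then to invoke \tref{weight1}. The facts I would record first are structural. Writing a primary decomposition $0 = \bigcap_i Q_i$ with $\sqrt{Q_i} = P_i$, the description of $0:J^\infty$ recalled just before the theorem gives $0:J^\infty = \bigcap_{J \not\subseteq P_i} Q_i$, so the associated primes of $\bar R$ are exactly the $P_i$ with $J \not\subseteq P_i$. Consequently $\dim \bar R = \max\{\dim R/P_i : J \not\subseteq P_i\}$, and the image $\bar J$ of $J$ lies in none of the associated primes of $\bar R$; by prime avoidance $\bar J$ therefore contains a nonzerodivisor of $\bar R$. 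I also record the \emph{easy half} of the comparison between $R$ and $\bar R$: if $a_1 \upto a_s \in J$ have images that are weighted independent in $\bar R$, then $a_1 \upto a_s$ is weighted independent in $R$, because reducing any $f \in Q$ modulo $0:J^\infty$ sends $\ini_\w(f)$ to $\ini_\w(\bar f)$ whenever the latter is nonzero, and the image of an invertible coefficient is invertible.

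For the lower bound it then suffices, after this reduction, to assume that $J$ contains a nonzerodivisor $b$ and to produce a weighted independent sequence of length $d := \dim R$ inside $J$. By \tref{weight1}\,(a) there is a sequence $\theta_1 \upto \theta_d$ that is weighted independent with respect to every weight sequence, and the candidate is $b\theta_1 \upto b\theta_d \in J$. Proving that multiplication by the nonzerodivisor $b$ preserves weighted independence is the crux of this direction: I would analyze it through the associated graded ring of \lref{presentation}, showing that the initial form of each $b\theta_i$ is the product of the initial form of $b$ with that of $\theta_i$ and that these products stay ``algebraically independent'', so that the coefficient ideal of the new sequence remains proper. Note that this step genuinely cannot be reduced to \pref{parameter}, since $\bigheight(b\theta_1 \upto b\theta_d)$ may be strictly smaller than $d$ (already for $J=(x)$ in $k[x,y]$). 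Lifting $b\theta_1 \upto b\theta_d$ back to elements of $J \subseteq R$ and applying the easy half above then gives the required sequence in $R$.

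For the upper bound I must show that every weighted independent sequence $a_1 \upto a_s \in J$ satisfies $s \le d$. Here \cref{bound} together with \lref{associated} is too weak, because $\dim G = \sup\{\height P : P \supseteq (a_1 \upto a_s)\}$ can exceed $d$. Instead I would establish the converse comparison—weighted independence in $R$ of a sequence in $J$ implies weighted independence in $\bar R$—and then apply \tref{weight1} to $\bar R$ to conclude $s \le \dim \bar R = d$. By \pref{local}, independence in $R$ localizes to independence in $R_\m$ for a maximal ideal $\m$ containing the coefficient ideal $C$. If $0:J^\infty \not\subseteq \m$, pick $\eta \in (0:J^\infty) \setminus \m$; then $\eta$ is a unit of $R_\m$ with $\eta J^N = 0$, so $J R_\m$ is nilpotent and each $a_i$ is nilpotent in $R_\m$, whence $x_i^k$ witnesses weighted dependence in $R_\m$—a contradiction. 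Thus $\m \supseteq 0:J^\infty$ and the problem becomes local with $0:J^\infty$ contained in the maximal ideal.

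The main obstacle is precisely this local statement: for a local ring with $0:J^\infty \subseteq \m$, weighted independence in $R$ forces weighted independence in $\bar R$. A dependence relation $\bar h \in \bar Q$ with an invertible initial coefficient lifts to some $h \in Q$, but $h$ may acquire extra terms of lower $\w$-degree whose coefficients lie in $0:J^\infty$, so that $\ini_\w(h)$ need no longer have an invertible coefficient. Multiplying $h$ by an element $b \in J^N$ with $(0:J^\infty)J^N = 0$ annihilates those low-degree coefficients—this is where $a_1 \upto a_s \in J$ is used—but it simultaneously multiplies the invertible coefficient by the non-unit $b$. Resolving this tension, so as to manufacture an element of $Q$ whose initial form really does have an invertible coefficient, is the heart of the argument; I expect it to rely on a careful (superficial-type) choice of $b$ together with the fact, available only because the ring is now local, that an element invertible modulo $0:J^\infty$ is already invertible in $R$. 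Granting both comparisons, the theorem follows by combining them with \tref{weight1} applied to $\bar R$.
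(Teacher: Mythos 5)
Your overall architecture (reduce everything to $\bar R := R/0:J^\infty$ and invoke \tref{weight1}) is sound, and several of your preliminary reductions are correct: the ``easy half'' of the $R\leftrightarrow\bar R$ comparison, the existence of $b\in J$ whose image is a nonzerodivisor in $\bar R$, and the localization showing one may assume $R$ local with $0:J^\infty\subseteq\m$. But both substantive steps are left unproven --- you explicitly label them ``the crux'' and ``the heart of the argument'' and then only describe what a proof would have to accomplish --- so as it stands the proposal has genuine gaps in both directions. Moreover, in the lower bound your candidate sequence is the wrong one: for a general weight sequence $\w$ you need $b^{w_1}\theta_1\upto b^{w_d}\theta_d$, not $b\theta_1\upto b\theta_d$. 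The point is that a weighted-homogeneous form $g$ of weighted degree $r$ satisfies $g(b^{w_1}\theta_1\upto b^{w_d}\theta_d)=b^r g(\theta_1\upto\theta_d)$, so writing $f=\ini_\w(f)+g_1+\cdots+g_n$ and substituting, one can factor out $b^r$ and use that $b$ is a nonzerodivisor modulo $0:J^\infty$ (the paper chooses $c\in J$ outside every associated prime of $0_R$ not containing $J$, so that $0:c^\infty=0:J^\infty$, and works with $a_1c^{w_1}\upto a_sc^{w_s}$) to conclude that $\ini_\w(f)$ kills the $\theta_i$ modulo $0:J^\infty$. With a uniform factor $b$, monomials of equal weighted degree but different total degree acquire different powers of $b$, the substitution does not respect the weighted grading, and no such cancellation is available; your proposed detour through ``the initial form of $b$ times the initial form of $\theta_i$'' in the associated graded ring is too vague to assess and does not obviously repair this.

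For the upper bound, the tension you describe --- that multiplying a lifted relation by $b\in J^N$ annihilates the unwanted low-degree coefficients in $0:J^\infty$ but also destroys the invertibility of the good coefficient --- dissolves once you multiply the \emph{polynomial} by the \emph{monomial} $x_1^m$ rather than by a ring element: if $f$ lifts a dependence relation monomial-by-monomial, then $a_1^m f(a_1\upto a_s)\in J^m(0:J^m)=0$, so $x_1^mf\in Q$, while $\ini_\w(x_1^mf)=x_1^m\ini_\w(f)$ has exactly the same coefficients as $\ini_\w(f)$; combined with your (correct) observation that in $R_\m$ with $0:J^\infty\subseteq\m$ an element invertible modulo $0:J^\infty$ is invertible, this closes your gap and yields a proof genuinely different from the paper's. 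The paper instead stays with the graded machinery: it shows $0:J^\infty\subseteq C$ directly from $(0:J^\infty)x_i^m\subseteq\ini_\w(Q)$, obtains a surjection $G/(0:J^\infty)G\to R[X]/\m R[X]$ giving $s\le\dim G/(0:J^\infty)G$, and then proves $\dim G/(0:J^\infty)G=\dim G'$, where $G'$ is the associated graded ring of $\bar R$ for the induced filtration, by showing via Artin--Rees that $(0:J^\infty)I_n=(0:J^\infty)\cap I_n=0$ for large $n$ and comparing Veronese subrings; \lref{associated} then gives $\dim G'\le\dim\bar R$. So your skepticism that the associated graded ring is ``too weak'' here is misplaced --- it is exactly the paper's route --- but your alternative route is viable only after the monomial-multiplication fix, which you did not supply.
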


\begin{proof}
  Let $P$ be a maximal prime of $R/0:J^\infty$ and $s = \height P$.
  Using Proposition \ref{parameter} we can find elements $a_1,\ldots,a_d$ in $R$ such that their
  residue classes in $R/0:J^\infty$ is a weighted independent sequence. 
 Choose $c \in J$ such that $c$ is not contained in any associated prime of $0_R$
  not containing $J$. Then $0:c^\infty = 0:J^\infty$. We claim that $a_1c^{w_1},\ldots,a_sc^{w_s}$ is a weighted
  independent sequence. To see this let $f$ be a polynomial in
  $R[X]$ vanishing at $a_1c^{w_1},\ldots,a_sc^{w_s}$ and
  $r = \deg \ini_\w(f)$. Write $f$ in the form $f = \ini_\w(f) + g_1 +
  \cdots + g_n$, where $g_i$ is a weighted homogeneous polynomial of
  degree $r + i$, $i = 1,\ldots,n$. Then
  \begin{align*}
  & f(a_1c^{w_1},\ldots,a_sc^{w_s}) \\
& = c^r\ini_\w(f)(a_1,\ldots,a_s) +
  c^{r+1}g_1(a_1,\ldots,a_s) + \cdots + c^{r+n}g_n(a_1,\ldots,a_s) =
  0.
  \end{align*}
  Therefore, if we put $h = \ini_\w(f) + cg_1 + \cdots + c^ng_n$, then
  $h(a_1,\ldots,a_s) \in 0:c^d \subseteq 0:J^\infty$ and $\ini_\w(h) =
  \ini_\w(f)$.  By the choice of $a_1,\ldots,a_s$, the coefficients of
  $\ini_\w(f)$ cannot not be invertible. This shows the existence of a
  weighted independent sequence of length $s$ in $J$. Hence $\dim R/0:J^\infty$ is less than or equal to
the supremum of the length of weighted independent sequences in $J$. 
\par

  Now we will show that $s \le \dim R/0:J^\infty$ for any weighted independent
  sequence $a_1,\ldots,a_s$ in $J$.  Let $m$ be a positive number such
  that $0:J^\infty = 0:J^m$. Then $(0:J^\infty)a_i^m = 0$, $i =   1,\ldots,s$.  
This implies $(0:J^\infty)x_i^m \subseteq \ini_\w(Q)$.  Hence $0:J^\infty \subseteq C$, 
where $C$ is the ideal of the coefficients of polynomials in $\ini_\w(Q)$.
Let~$\m$ be a maximal ideal of $R$ containing $C$. 
Then $\ini_\w(Q) + (0:J^\infty)R[X] \subseteq \m R[X]$. 
By Lemma \ref{associated} there  is a surjective map $G/(0:J^\infty)G \to R[X]/\m R[X]$. 
From this it   follows that $s = \dim R[X]/\m R[X] \le \dim G/(0:J^\infty)G$. We
  have  
  \[
  G/(0:J^\infty)G = \oplus_{n \ge 0}I_n/\big((0:J^\infty)I_n+I_{n+1}\big),
  \]
  and we will compare this with the ring
  \[
  G' := \oplus_{n \ge 0}\big(I_{n}+ (0:I^\infty)\big)/\big(I_{n+1}+
  (0:I^\infty)\big),
  \]
  which is the associated graded ring of $R/0:J^\infty$ with respect
  to the filtration $\big(I_n+(0:J^\infty)\big)/(0:J^\infty)$, $n \ge
  0$.  Note that $\dim G' \le \dim R/0:J^\infty$ by Lemma
  \ref{associated}.  Then $s \le \dim R/0:J^\infty$ if we can show that $\dim
  G/(0:J^\infty)G = \dim G'$.

  Let $w_{\max} := \max \{w_i|\ i = 1,\ldots,s\}$. By \eref{filtration} we have
  $I_n \subseteq I^m$ for $n \ge mw_{\max}$.   This implies $(0:J^\infty)I_n \subseteq (0:J^\infty)I^m = 0$.  
Using Artin-Rees lemma we can also show that
  $(0:J^\infty) \cap I_n = 0$ for $n$ large enough.  Thus, there
  exists a positive number $r$ such that $(0:J^\infty)I_n =
  (0:J^\infty) \cap I_n = 0$ for $n \ge r$.  This relation implies
  \[
  I_n/\big((0:J^\infty)I_n+I_{n+1}\big) = I_n/\big((0:J^\infty)\cap
  I_n + I_{n+1}\big) \cong \big(I_n + (0:J^\infty)\big)/\big(I_{n+1} +
  (0:J^\infty)\big).
  \]
  Hence
  \[\bigoplus_{n \ge 0} I_{nr}/\big((0:J^\infty)I_{nr}+I_{nr+1}\big)
  \cong \bigoplus_{n \ge 0}\big(I_{nr}+
  (0:J^\infty)\big)/\big(I_{nr+1}+ (0:J^\infty)\big).
  \]
  The graded rings on both sides are Veronese subrings of
  $G/(0:J^\infty)G$ and $G'$, respectively. Since the dimension of a
  Veronese subring is the same as of the original ring, we get $\dim
  G/(0:J^\infty)G = \dim G'$, as required. 
\end{proof}

Theorem  \ref{weight2} has the following immediate consequence.

\begin{cor} 
Let $R$ be a local ring and $J$ an ideal of $R$. 
Then $\dim R/0:J^\infty$ is the maximum number of analytically independent elements in $J$.
\end{cor}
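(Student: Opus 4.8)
The plan is to obtain this corollary as an immediate specialization of \tref{weight2} to the constant weight sequence $\w = 1,1,\ldots$, combined with the equivalence between weighted independence and analytic independence for local rings that was established earlier in this section. Since \tref{weight2} holds for \emph{every} weight sequence, I am free to choose $\w = 1,1,\ldots$, for which $\ini_\w(f)$ is the homogeneous part of least degree of $f$.

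First I would invoke the observation made in the paragraph preceding \exref{not permute}: when $R$ is local and $\w = 1,1,\ldots$, a sequence $a_1 \upto a_s \in R$ is weighted independent with respect to $\w$ if and only if $a_1 \upto a_s$ are analytically independent. Recall that this rests on the homogenization argument given there, which converts a polynomial $f$ vanishing at the sequence into a genuinely homogeneous polynomial $g + a_1 g_1 + \cdots + a_s g_s$ with the same least-degree part, and on the fact that in a local ring an element is a non-unit precisely when it lies in the maximal ideal $\m$; thus the coefficients of $\ini_\w(f)$ are non-units exactly when some homogeneous relation has all its coefficients in $\m$. Note also that, since all weights are equal to $1$, this notion does not depend on the order of the $a_i$, matching the permutation-invariance of analytic independence.

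Next I would combine the two statements. Applying \tref{weight2} with $\w = 1,1,\ldots$ shows that $\dim R/0:J^\infty$ is the supremum of the lengths of sequences in $J$ that are weighted independent with respect to $\w$; by the equivalence above this supremum equals the supremum of the lengths of analytically independent sequences contained in $J$. Because this supremum is bounded above by $\dim R$, which is finite since $R$ is Noetherian and local, it is actually attained, so it is the maximum number of analytically independent elements in $J$. This completes the argument.

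I do not expect a genuine obstacle, as the substantive content is already carried by \tref{weight2}. The only points deserving attention are verifying that the local equivalence of weighted and analytic independence is exactly the one proved before \exref{not permute}, so that the choice $\w = 1,1,\ldots$ is justified, and observing that finiteness of $\dim R$ upgrades the supremum to an attained maximum, which is what the phrase ``maximum number'' requires.
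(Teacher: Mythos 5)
Your proposal is correct and matches the paper's intended argument: the corollary is stated there as an immediate consequence of Theorem~\ref{weight2}, obtained precisely by specializing to the weight sequence $\w = 1,1,\ldots$ and invoking the equivalence between weighted independence and analytic independence for local rings established before Example~\ref{not permute}. Your additional remark that finiteness of $\dim R$ turns the supremum into an attained maximum is a sensible point of care that the paper leaves implicit.
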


This result seems to be new though there was a
general (but complicated) formula for the maximum number of $\frak
a$-independent elements in $J$, where $\frak a$ is an ideal containing
$J$ (see~\cite{Bjork:82}, \cite {Trung:85}). Recall that the elements
$a_1 \upto a_s$ are called $\frak a$-independent if every homogeneous
form in $R[X]$ vanishing at $a_1 \upto a_s$ has all its coefficients
in $\frak a$.  This notion was introduced by Valla
\cite{Valla:70}.

\section{Independent sequences with respect to a monomial
  order} \label{2sOrder}

In this section we will show how to approximate a monomial order by a
weighted degree and we will prove that the Krull dimension is the
supremum of the length of independent
sequences with respect to an arbitrary monomial order. \par

Let $a_1,\ldots,a_s$ be elements of a Noetherian ring $R$. 
Recall that $a_1 \upto a_s$ is a {\em   dependent sequence} 
with respect to a monomial order~$\prec$ if there exists $f \in
R[x_1,...,x_s]$ vanishing at $a_1 \upto a_s$ such that the coefficient of
$\ini_\prec(f)$ is invertible. Otherwise, $a_1 \upto a_s$ is called an {\em independent
  sequence} with respect to~$\prec$. 

The following example suggests that dependence with respect to a
monomial order is more subtle than weighted dependence.

\begin{ex} \label{2exZ}%
  Let $R = \ZZ$ and let $\prec$ be the lexicographic order with $x_1
  \succ x_2$. Clearly the single elements that are dependent with
  respect to~$\prec$ are~$0$ and the invertible elements. We claim
  that a sequence of two arbitrary integers $a,b$ is always dependent
  with respect to~$\prec$. The relation $a b = c a^2 + d b^2$ found in
  \exref{1exZ} does not show the dependence, so we have to argue in a
  different way. We may assume~$a$ and~$b$ to be nonzero and write
  \[
  a = \pm \prod_{i=1}^r p_i^{d_i} \quad \text{and} \quad b = \pm
  \prod_{i=1}^r p_i^{e_i},
  \]
  where the $p_i$ are pairwise distinct prime numbers and $d_i,e_i \in
  \NN_0$. Choose $n \in \NN_0$ such that $n \ge d_i/e_i$ for all~$i$
  with $e_i > 0$. Then
  \[
  \gcd(a,b^{n+1}) = \prod_{i=1}^r p_i^{\min\{d_i,(n+1) e_i\}} \quad
  \text{divides} \quad \prod_{i=1}^r p_i^{n e_i} = b^n,
  \]
  so there exist $c,d \in \ZZ$ such that $b^n = c a + d b^{n+1}$.
  Since the least term of $f = x_2^n - c x_1 - d x_2^{n+1}$ is $x_2^n$
  this relation shows that $a,b$ are dependent, as claimed.

  The argument can easily be adapted to any Dedekind domain.
\end{ex}

It is easy to see that the notion of independent sequence depends on
the order of the elements. For instance, the sequence $uv,v$ of
\exref{not permute} is independent with respect to the lexicographic
order, while $v,uv$ is not by using the same arguments. \par

Set $R[X] = R[x_1 \upto x_s]$ and $Q =
(x_1-a_1,\ldots,x_s-a_s)$, the ideal of all polynomials of $R[X]$
vanishing at $a_1,\ldots,a_s$.  Let $\ini_\prec(Q)$ denote the ideal
generated by the terms $\ini_\prec(f)$, $f \in Q$.  One may ask
whether there exists a weight sequence~$\w$ such that $\ini_\w(Q) =
\ini_\prec(Q)$. For this will imply that $a_1,\ldots,a_s$ is an
independent sequence with respect to~$\prec$ if and only if it is a
weighted independent sequence with respect to~$\w$. \par

To study this problem we need the following result in Gr\"obner basis theory.

\begin{lemma}[{see Eisenbud \cite[Exercise 15.12]{eis},
    \cite[Exercise~9.2(b)]{Kemper.Comalg}}] \label{approximate}%
  Let $\mathcal M$ be a finite set of polynomials. Then there exists a weight
  sequence $\w$ such that $\ini_\prec(f) = \ini_\w(f)$ for all $f \in
  \mathcal M$.
\end{lemma}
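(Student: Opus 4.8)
The plan is to reduce the assertion to a finite system of strict linear inequalities in the weights and then to solve that system by a Farkas/Gordan alternative, with the multiplicativity of $\prec$ supplying the decisive contradiction. Since $\mathcal M$ is finite, only finitely many variables $x_1 \upto x_n$ occur among its members, and only finitely many monomials occur in total. For each $f \in \mathcal M$ write $\ini_\prec(f) = x^{\alpha_f}$; because $\ini_\prec(f)$ is the $\prec$-least term, every other monomial $x^\beta$ occurring in $f$ satisfies $x^{\alpha_f} \prec x^\beta$. To guarantee $\ini_\w(f) = \ini_\prec(f)$ it suffices to arrange that $x^{\alpha_f}$ is the \emph{unique} monomial of $f$ of least $\w$-degree, i.e.\ that $\langle \w, \gamma\rangle > 0$ for every difference vector $\gamma := \beta - \alpha_f$ arising this way. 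Collecting all these finitely many vectors into a set $\Gamma \subset \ZZ^n$ and adjoining the standard basis vectors $e_1 \upto e_n$ — which encode the relations $1 \prec x_i$, so that $\langle \w, e_i\rangle > 0$ is precisely $w_i > 0$ — reduces the lemma to finding $\w \in \RR^n$ with $\langle \w, \gamma\rangle > 0$ for every $\gamma$ in the finite augmented set $\Gamma'$. A rational such $\w$ can then be cleared of denominators to yield positive integers, and the weights of the variables $x_{n+1}, x_{n+2}, \ldots$ not occurring in $\mathcal M$ may be set to $1$.

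To produce $\w$ I would invoke Gordan's theorem: either the sought vector exists, or there are nonnegative reals $\lambda_\gamma$, not all zero, with $\sum_{\gamma \in \Gamma'} \lambda_\gamma \gamma = 0$; as everything is defined over $\QQ$, in the second case the $\lambda_\gamma$ may be taken to be nonnegative integers. Each $\gamma \in \Gamma'$ comes from a strict relation $x^{\alpha_\gamma} \prec x^{\beta_\gamma}$ with $\gamma = \beta_\gamma - \alpha_\gamma$ (the adjoined $e_i$ corresponding to $1 \prec x_i$), so the vanishing relation $\sum_\gamma \lambda_\gamma (\beta_\gamma - \alpha_\gamma) = 0$ says exactly that the two monomials $\prod_\gamma (x^{\alpha_\gamma})^{\lambda_\gamma}$ and $\prod_\gamma (x^{\beta_\gamma})^{\lambda_\gamma}$ are equal.

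The crux, and the point at which the defining property of a monomial order is indispensable, is to contradict this equality. Here I would use multiplicativity: since strictness is preserved under multiplication by a monomial (and $u \prec v$ implies $u^k \prec v^k$), multiplying the inequalities $x^{\alpha_\gamma} \prec x^{\beta_\gamma}$ over the nonempty support of $(\lambda_\gamma)$, each raised to the power $\lambda_\gamma$, gives $\prod_\gamma (x^{\alpha_\gamma})^{\lambda_\gamma} \prec \prod_\gamma (x^{\beta_\gamma})^{\lambda_\gamma}$. This contradicts the equality of the two products, since $\prec$ is irreflexive. Hence the second alternative is impossible, the required $\w$ exists, and by construction $\ini_\w(f) = \ini_\prec(f)$ for every $f \in \mathcal M$. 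The main obstacle is thus entirely concentrated in this last contradiction; the remainder is a routine translation between monomial comparisons and linear inequalities.
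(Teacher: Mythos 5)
Your proof is correct. The paper gives no proof of this lemma itself (it is cited to Eisenbud and Kemper), but its proof of the generalization to monomial preorders (\lref{lCone}) follows essentially the same route you take: reduce the claim to finitely many strict linear inequalities on difference vectors (augmented by the standard basis vectors to force positive weights), produce the weight vector by a separation argument (the paper uses a nearest-point/separating-hyperplane argument where you invoke Gordan's alternative, which is an equivalent tool), and rule out the degenerate alternative exactly as you do, by multiplying the strict relations $x^{\alpha_\gamma} \prec x^{\beta_\gamma}$ and contradicting irreflexivity of~$\prec$.
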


We call~$\prec$ a {\em Noetherian monomial order} if for every
monomial $f \in R[X]$ there are only finitely many monomials $g \in
R[X]$ with $g \prec f$. This class of monomial orders is rather
large. For instance, every monomial order that first compares the
(weighted) degree of the monomials is Noetherian.

\begin{prop} \label{compare}%
  For every ideal $\Im$ of $R[X]$, there exists a weight sequence $\w$
  such that $\ini_\prec(\Im) \subseteq \ini_\w(\Im)$. If~$\prec\ $ is
  Noetherian, $\w$ can be chosen such that $\ini_\prec(\Im) =
  \ini_\w(\Im)$.
\end{prop}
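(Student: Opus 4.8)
The proof naturally splits into the containment valid for an arbitrary monomial order and the reverse containment, where the Noetherian hypothesis is essential. For the first assertion, the plan is to exploit that $R[X]$ is Noetherian, so $\ini_\prec(\Im)$ is finitely generated. Since this ideal is generated by the initial terms $\ini_\prec(f)$ with $f \in \Im$, a finite generating set can be extracted: there are $f_1 \upto f_k \in \Im$ with $\ini_\prec(\Im) = (\ini_\prec(f_1) \upto \ini_\prec(f_k))$, that is, a Gr\"obner basis of $\Im$ with respect to $\prec$. Applying \lref{approximate} to the finite set $\{f_1 \upto f_k\}$ yields a weight sequence $\w$ with $\ini_\w(f_i) = \ini_\prec(f_i)$ for every $i$. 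Each $\ini_\prec(f_i) = \ini_\w(f_i)$ then lies in $\ini_\w(\Im)$, so $\ini_\prec(\Im) \subseteq \ini_\w(\Im)$, as required.

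For the equality under the Noetherian hypothesis I would refine the choice of $\w$ so that it not only matches the Gr\"obner basis but is also \emph{compatible} with $\prec$, in the sense that $m \prec m'$ holds whenever $\deg_\w m < \deg_\w m'$; equivalently, the $\prec$-least term of any polynomial survives in its $\w$-initial form, so that $\ini_\prec\bigl(\ini_\w(f)\bigr) = \ini_\prec(f)$. This is exactly where the Noetherian property enters. Because $\prec$ is Noetherian, it refines the preorder of a weight with strictly positive entries, so a compatible positive weight exists; and because only finitely many monomials lie below any given one, the restriction of $\prec$ to any bounded region is realized by an honest integer weight. Combining a large multiple of a positive compatible weight with such a finite tie-breaking adjustment should produce a single $\w$ that is compatible with $\prec$ and, on the finitely many monomials occurring in the chosen generators, realizes $\prec$ exactly, so that each $\ini_\w(f_i)$ is the single term $\ini_\prec(f_i)$.

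Granting such a $\w$, I would prove $\ini_\w(f) \in \ini_\prec(\Im)$ for every $f \in \Im$ by a Gr\"obner reduction. The $\prec$-least term of $\ini_\w(f)$ equals $\ini_\prec(f) \in \ini_\prec(\Im)$; subtracting a suitable combination $\sum_i t_i f_i$ of the Gr\"obner basis cancels this term, yielding $f' \in \Im$ whose $\w$-initial form has the same weighted degree but a strictly larger $\prec$-least term, with the removed piece lying in $(\ini_\w(f_i)) = \ini_\prec(\Im)$. Since $\w$ is positive, only finitely many monomials share the weighted degree of $\ini_\w(f)$, so the process terminates and exhibits $\ini_\w(f)$ in $\ini_\prec(\Im)$; hence $\ini_\w(\Im) \subseteq \ini_\prec(\Im)$.

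The main obstacle is the construction in the second paragraph: producing one weight sequence that is simultaneously globally compatible with the total order $\prec$ and fine enough to split the initial forms of the generators into single terms. These pull against each other, since breaking ties in general destroys compatibility at high degree, and this is precisely the point at which a non-Noetherian order such as the lexicographic order fails. The Noetherian hypothesis reconciles the two by confining the infinitely many order relations that must be respected to a finite, bounded-degree set and by making the reduction terminate. I expect the delicate step to be choosing this finite set of monomials large enough that the resulting $\ini_\w(\Im)$ is already generated within it, so that the reverse inclusion closes without circularity.
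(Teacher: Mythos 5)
Your first paragraph is exactly the paper's proof of the first assertion (extract $g_1\upto g_r\in\Im$ with $\ini_\prec(\Im)=\big(\ini_\prec(g_1)\upto\ini_\prec(g_r)\big)$ and apply \lref{approximate}), so nothing to add there. The second assertion, however, has a genuine gap, and it sits exactly where you flagged it: your reduction in the third paragraph is predicated on an integer weight sequence $\w$ that is \emph{globally} compatible with $\prec$, i.e.\ $\deg_\w m<\deg_\w m'$ implies $m\prec m'$ for all monomials. Such a $\w$ need not exist, even for a Noetherian order. Take $s=2$ and let $\prec$ be given by $x_1^{a}x_2^{b}\prec x_1^{c}x_2^{d}$ iff $a+b\sqrt{2}<c+d\sqrt{2}$; since no ties occur and every monomial has only finitely many predecessors, this is a Noetherian monomial order. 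Compatibility with an integer weight $(w_1,w_2)$ applied to the pairs $x_1^{\lfloor b\sqrt{2}\rfloor},\,x_2^{b}$ and $x_2^{b},\,x_1^{\lfloor b\sqrt{2}\rfloor+1}$ for all $b$ forces both $w_2/w_1\ge\sqrt{2}$ and $w_2/w_1\le\sqrt{2}$, which is impossible over $\QQ$. Your intuition that a Noetherian order refines a strictly positive weight is correct only for a \emph{real} weight (the first column of Robbiano's matrix), and no finite tie-breaking adjustment can repair the infinitely many high-degree violations that an integer approximation incurs. So the object you ``grant'' at the start of your reduction cannot in general be constructed.

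The paper avoids this entirely: it keeps the $\w$ from the first part, which matches $\prec$ only on the finitely many generators $g_i$, and runs a double extremality argument instead of demanding compatibility. Supposing some $\ini_\w(f)\notin\ini_\prec(\Im)$, one chooses $f$ first so that $\ini_\w(f)$ has as few terms as possible, and then, among all $g\in\Im$ with $\ini_\w(g)=\ini_\w(f)$, so that $\ini_\prec(f)$ is $\prec$-maximal; this maximum exists because every such $\ini_\prec(g)$ satisfies $\ini_\prec(g)\preceq\ini_\prec(\ini_\w(f))$ and the Noetherian hypothesis makes the set of monomials below a fixed one finite --- this is the only place Noetherianity is used. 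A single reduction step against the $g_i$ then produces $g=f-h\in\Im$ with $\ini_\w(g)=\ini_\w(f)-\ini_\w(h)$ having strictly fewer terms than $\ini_\w(f)$, and the two extremal choices turn this into a contradiction. If you want to salvage your reduction scheme, you must replace global compatibility by this local maximality device; as written, the proof does not close.
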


\begin{proof}
  Choose $g_1,\ldots,g_r \in \Im$ such that $\ini_\prec(\Im) =
  \big(\ini_\prec(g_1),\ldots, \ini_\prec(g_r)\big)$.  From
  \lref{approximate} it follows that there exists a weight sequence
  $\w$ such that $\ini_\prec(g_i) = \ini_\w(g_i)$ for all $i =
  1,\ldots,r$. This implies the first assertion:
  \[
  \ini_\prec(\Im) = \big(\ini_\w(g_1),\ldots,\ini_\w(g_r)\big) \subseteq
  \ini_\w(\Im).
  \]
  Now we will assume that~$\prec$ is Noetherian and prove equality.
  By way of contradiction, assume that there exists a polynomial $f
  \in \Im$ such that $\ini_\w(f) \not\in \ini_\prec(\Im)$.  Choose $f$
  such that $\ini_\w(f)$ has the least possible number of terms. For
  every $g \in R[X]$ we have $\ini_\prec(g) \preceq
  \ini_\prec(\ini_\w(g))$, so $\ini_\prec(\ini_\w(f))$ is an upper
  bound for all initial terms of polynomials~$g$ with $\ini_\w(g) =
  \ini_\w(f)$. By the assumption on the monomial order, we can
  therefore choose~$f$ such that for all $g \in \Im$,
  \begin{equation} \label{2eqMax}%
    \ini_\w(g) = \ini_\w(f) \quad \text{implies} \quad \ini_\prec(g)
    \preceq \ini_\prec(f).
  \end{equation}
  Since $\ini_\prec(f) \in \ini_\prec(\Im)$, we have $\ini_\prec(f) =
  h_1\ini_\prec(g_1) + \cdots + h_r \ini_\prec(g_r)$ for some
  polynomials $h_1,\ldots,h_r$. By deleting some terms of the $h_i$,
  we may assume that either $h_i = 0$ or $h_i$ is a term such that
  $h_i \ini_\prec(g_i)$ and $\ini_\prec(f)$ are $R$-multiples of the
  same monomial. Set $h = h_1g_1 + \cdots + h_rg_r \in \Im$. Then
  \begin{equation} \label{2eqH}
    \ini_\prec(f) = \ini_\prec(h) = \ini_\w(h),
  \end{equation}
  where the second equality follows from $\ini_\prec(g_i) =
  \ini_\w(g_i)$. For $g := f - h \in \Im$, this implies $\ini_\prec(g)
  \succ \ini_\prec(f)$, so $\ini_\w(g) \ne \ini_\w(f)$
  by~\eqref{2eqMax}. We also have $\ini_\w(h) \ne \ini_\w(f)$ because
  otherwise $\ini_\w(f) = \ini_\prec(f) \in \ini_\prec(I)$
  by~\eqref{2eqH}. For the weighted degrees we have the inequality
  \[
  \deg\left(\ini_\w(f)\right) \le \deg\left(\ini_\prec(f)\right) =
  \deg\left(\ini_\w(h)\right).
  \]
  In combination with $\ini_\w(g) \ne \ini_\w(f) \ne \ini_\w(h)$, this
  implies that $\ini_\w(f)$, $\ini_\w(h)$, and $\ini_\w(g)$ all have
  the same degree. So $\ini_\w(g) = \ini_\w(f) -
  \ini_\w(h)$. By~\eqref{2eqH}, subtracting $\ini_\w(h)$ from
  $\ini_\w(f)$ removes the initial term of $\ini_\w(f)$ but leaves all
  other terms unchanged. So $\ini_\w(g)$ has fewer terms than
  $\ini_\w(f)$, and because of the choice of~$f$ we conclude
  $\ini_\w(g) \in \ini_\prec(\Im)$. But since $\ini_\w(h) \in
  \ini_\prec(\Im)$ by~\eqref{2eqH}, this implies $\ini_\w(f) \in
  \ini_\prec(\Im)$, a contradiction.
\end{proof}

We do not know whether the Noetherian hypothesis is really necessary
for the second assertion of \pref{compare}.
  
\begin{remark*}
  For a polynomial $f \in R[X]$, we can also consider the leading term
  $\LT_\prec(f)$ and the weighted homogeneous part of highest degree,
  i.e., the {\em leading form} $\LF_\w(f)$. This defines
  $\LT_\prec(\Im)$ and $\LF_\w(\Im)$ for an ideal $\Im \subseteq R[X]$.
  Then \pref{compare} remains correct without the Noetherian
  hypothesis if we substitute $\ini_\prec$ by $\LT_\prec$ and
  $\ini_\w$ by $\LF_\w$.  This is a well known result in Gr\"obner
  basis theory (see Eisenbud~\cite[Proposition 15.16]{eis} or
  \cite[Exercise~9.2(c)]{Kemper.Comalg}).
\end{remark*}

Proposition \ref{compare} implies the following relationship between
weighted independent sequences and independent sequences with respect
to monomial orders.

\begin{cor} \label{Noetherian}%
  Let $a_1 \upto a_s$ be a sequence of elements in $R$.
  \begin{enumerate}
  \item \label{NoetherianA} If the sequence is weighted independent
    with respect to every weight sequence, it is independent with
    respect to every monomial order.
  \item \label{NoetherianB} If the sequence is weighted dependent with
    respect to every weight sequence, it is dependent with respect to
    every Noetherian monomial order.
  \end{enumerate}
\end{cor}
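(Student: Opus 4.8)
The plan is to reduce both statements to the coefficient ideals of $\ini_\prec(Q)$ and $\ini_\w(Q)$, exactly as in the weighted case of \sref{1sWeighted}, and then feed in the containment and equality of these initial ideals supplied by \pref{compare}. Throughout I write $Q = (x_1-a_1 \upto x_s-a_s)$, let $C$ be the coefficient ideal of $\ini_\w(Q)$ (so that, by \sref{1sWeighted}, the sequence is weighted independent with respect to $\w$ precisely when $C \ne R$), and let $C_\prec$ denote the ideal of $R$ generated by the coefficients of all polynomials in $\ini_\prec(Q)$.

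Part \eqref{NoetherianA} I would treat directly. Given a monomial order $\prec$, \pref{compare} furnishes a weight sequence $\w$ with $\ini_\prec(Q) \subseteq \ini_\w(Q)$. For arbitrary $f \in Q$ we have $\ini_\prec(f) \in \ini_\prec(Q) \subseteq \ini_\w(Q)$, so the coefficient of $\ini_\prec(f)$ lies in $C$. As the sequence is weighted independent with respect to this $\w$, the ideal $C$ is proper, hence no $\ini_\prec(f)$ has an invertible coefficient, and the sequence is independent with respect to $\prec$. No single-witness issue arises here, because $\ini_\prec(f)$ is already a single term.

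For part \eqref{NoetherianB} the extra input is the characterization that the sequence is dependent with respect to $\prec$ if and only if $C_\prec = R$. The forward implication is immediate, since $\ini_\prec(f) \in \ini_\prec(Q)$ with invertible coefficient forces $C_\prec = R$; the reverse is the crux, and is where I expect the main obstacle. The difficulty is that dependence is defined by a single polynomial $f \in Q$, whereas $C_\prec = R$ only says that some $R[X]$-combination of the terms $\ini_\prec(f_i)$ has an invertible coefficient. To bridge this I would show that the set $L = \{\text{leading coefficient of } \ini_\prec(f) : f \in Q\} \cup \{0\}$ is itself an ideal of $R$. Closure under multiplication by $r \in R$ holds because, when $r \cdot \text{lc}(f) \ne 0$, the least monomial of $rf$ is unchanged; closure under addition follows by first multiplying $f$ and $g$ by suitable monomials so that their least monomials coincide (monomial multiplication preserves the order), after which the coefficient of that common monomial in the sum is $\text{lc}(f) + \text{lc}(g)$. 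Since every coefficient occurring in $\ini_\prec(Q)$ is an $R$-combination of leading coefficients, this gives $C_\prec = L$, so $C_\prec = R$ yields $1 \in L$, i.e. some single $f \in Q$ has invertible leading coefficient, which is exactly dependence with respect to $\prec$.

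With this characterization in hand, part \eqref{NoetherianB} is quick. For a Noetherian monomial order $\prec$, \pref{compare} gives a weight sequence $\w$ with $\ini_\prec(Q) = \ini_\w(Q)$, whence $C_\prec = C$. If the sequence is weighted dependent with respect to every weight sequence, then in particular $C = R$ for this $\w$, so $C_\prec = R$, and by the characterization the sequence is dependent with respect to $\prec$. The role of the Noetherian hypothesis is exactly to upgrade the containment in \pref{compare} to an equality, which is what forces $C_\prec = C$ rather than merely $C_\prec \subseteq C$.
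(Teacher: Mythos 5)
Your proof is correct and follows essentially the same route as the paper: apply \pref{compare} to obtain $\ini_\prec(Q) \subseteq \ini_\w(Q)$, with equality when $\prec$ is Noetherian, and then compare the coefficient ideals. The only difference is that you explicitly verify that the leading coefficients of elements of $Q$ form an ideal equal to $C_\prec$ --- the step needed to pass from ``$C_\prec = R$'' to a single witness $f \in Q$ with invertible $\ini_\prec(f)$, which the paper leaves implicit in part (b) as the monomial-order analogue of the observation made for $C$ in \sref{1sWeighted}; this is a worthwhile clarification rather than a divergence.
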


\begin{proof}
  By \pref{compare} there exists a weight sequence~$\w$ such that
  $\ini_\prec(Q) \subseteq \ini_\w(Q)$, with equality if~$\prec$ is
  Noetherian. Under the hypothesis of~\eqref{NoetherianA} there exists
  a maximal ideal $\m \subset R$ such that $\ini_\w(Q) \subseteq \m
  R[X]$, so $\ini_\prec(Q) \subseteq \m R[X]$. This shows that the
  sequence is independent with respect to~$\prec$.

  Under the hypothesis of~\eqref{NoetherianB}, the ideal $C$ of
  coefficients of polynomials in $\ini_\w(Q)$ is $R$. Since
  $\ini_\w(Q) = \ini_\prec(Q)$, this implies that the sequence is
  dependent with respect to~$\prec$.
\end{proof}

We will get rid of the Noetherian hypothesis on a monomial order by
showing that an independent sequence with respect to an arbitrary
monomial order can be converted to an independent sequence of the same
length with respect to a Noetherian monomial order. To do that we
shall need Robbiano's characterization of monomial orders.

\begin{lemma}[\cite{Robbiano:85}] \label{Robbiano}%
  For every monomial order $\prec$ in $s$ variables, there exists a
  real matrix $M$ having $s$ rows such that $x_1^{m_1} \cdots
  x_s^{m_s}\ \prec \ x_1^{m_1'} \cdots x_s^{m_s'}$ if and only if
  $$(m_1,\ldots,m_s)\cdot M <_{\lex} (m_1',\ldots,m_s')\cdot M,$$
 where $<_{\lex}$
  is the lexicographic order. Moreover, the first column of $M$ is
  nonzero and all its entries are nonnegative.
\end{lemma}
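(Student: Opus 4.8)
The statement to prove is Robbiano's theorem: every monomial order $\prec$ on the monomials in $s$ variables is representable by a real matrix $M$ with $s$ rows, via comparison of the image vectors under $<_{\lex}$, with the additional normalization that the first column is nonzero and nonnegative.

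The plan is to realize the monomial order as a lexicographic comparison of finitely many linear forms, built one coordinate at a time. First I would extend $\prec$ from the monoid of exponent vectors $\NN_0^s$ to the group $\ZZ^s$ by setting $\alpha \prec \beta$ iff $\alpha + \gamma \prec \beta + \gamma$ for any (equivalently, some) $\gamma$ making both shifted vectors nonnegative; compatibility of $\prec$ with addition guarantees this is well defined and yields a total order on $\ZZ^s$ that is again translation invariant. The key structural fact is then that $\prec$ is a group order on $\ZZ^s$, so its positive cone $C = \{\alpha : \alpha \succ 0\}$ is a subsemigroup whose closure I will analyze.

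The main construction proceeds recursively on the dimension. Let $V = \RR^s$. I would consider the first linear form as follows: because $\prec$ is Archimedean-compatible with the semigroup structure, there is a linear functional $\ell_1 \colon \RR^s \to \RR$ (a first row of $M$, giving the first column after transposing) such that $\ell_1(\alpha) > 0$ forces $\alpha \succ 0$, and $\ell_1 \ge 0$ on the positive cone. Concretely $\ell_1$ is obtained by separating the cone $C$ from its negative using a supporting hyperplane — a convexity/Hahn-Banach argument applied to the convex hull of $C$ in $\RR^s$. Monomials on which $\ell_1$ takes distinct values are then ordered by the sign of the difference of $\ell_1$-values. On the subgroup $H_1 = \ker(\ell_1) \cap \ZZ^s$ where $\ell_1$ vanishes, $\prec$ restricts to a translation-invariant total order, and by the inductive hypothesis (now in a lattice of strictly smaller rank, since $\ell_1 \ne 0$) this restricted order is represented by finitely many further linear forms $\ell_2, \ell_3, \ldots$ I would assemble these $\ell_i$ as the rows of a matrix whose $i$-th column records the $i$-th coordinate of the image, so that comparing exponent vectors by $\prec$ is exactly the lexicographic comparison of the sequences $(\ell_1(\alpha), \ell_2(\alpha), \ldots)$. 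Finally the normalization of the first column is arranged by noting that each variable $x_i$ satisfies $x_i \succ 1$, i.e.\ $e_i \succ 0$, so $\ell_1(e_i) \ge 0$ for every $i$; rescaling and, if necessary, perturbing $\ell_1$ within the open half-space that still separates the cone lets me take $\ell_1 \ge 0$ with not all entries zero, which is precisely the stated condition on the first column of $M$.

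The hard part will be the base of the convexity argument: showing that the positive cone $C$ really can be separated from $-C$ by a hyperplane that is \emph{nonnegative on all the standard basis vectors simultaneously}, and controlling the degenerate case where $\ell_1$ vanishes on part of $C$ so that the recursion genuinely descends in rank. One must check that $H_1 \cap \ZZ^s$ is a sublattice of strictly smaller rank and that the induced order on it is still a monoid order in the sense needed to invoke the inductive hypothesis; the subtlety is that the linear forms produced may have irrational entries, so finiteness of the matrix (the recursion terminating after at most $s$ steps) relies on the rank of the ambient lattice strictly dropping at each stage rather than on any rationality. I expect everything else — well-definedness of the extension to $\ZZ^s$, and the verification that lexicographic comparison of the $\ell_i$-values reproduces $\prec$ — to be routine once the separating functionals are in hand.
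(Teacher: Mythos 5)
The paper does not actually prove this lemma: it is quoted from Robbiano's 1985 paper as a known classification of term orders, so there is no in-text argument to compare yours against. Your outline is essentially the standard proof (equivalently, the Hahn embedding theorem specialized to $\ZZ^s$): extend $\prec$ to a translation-invariant total order on $\ZZ^s$, separate the positive cone $C$ from the origin to obtain the first linear form $\ell_1$, and recurse on $\ker(\ell_1)\cap\ZZ^s$, which has strictly smaller rank, so the process stops after at most $s$ steps. The crucial separation step --- that the cone of positive real combinations of elements of $C$ does not contain $0$, because a relation $\sum_i\alpha_i u_i=0$ with $\alpha_i>0$ and $u_i\in C$ could be replaced by a rational and then an integral one, forcing $0\in C$ --- is exactly the argument the authors do write out in the proof of Lemma~\ref{lCone} for monomial preorders, so your plan fits the paper's toolkit. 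Two small corrections. First, the nonnegativity and nonvanishing of the first column are automatic: each $e_i$ lies in $C$ because $1\prec x_i$, so $\ell_1(e_i)\ge 0$, and since $\ell_1\ne 0$ on $\RR^s$ some $\ell_1(e_i)>0$; no perturbation of $\ell_1$ is needed, and perturbing is actually risky since $\ell_1\ge 0$ must hold on the infinite set $C$, not just on a finite subset. Second, the phrase ``Archimedean-compatible'' is misleading: monomial orders need not be Archimedean (the lexicographic order is not), and the existence of $\ell_1$ comes purely from convex separation together with the integrality argument above, not from any Archimedean property. With those points repaired, your induction (stated for translation-invariant total orders on lattices of rank $r$, so that the hypothesis applies to the sublattice even though the restricted order is no longer a monomial order on an orthant) goes through.
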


\begin{prop} \label{change}%
  Let $a_1,\ldots,a_s \in R$ be an independent sequence with respect
  to an arbitrary monomial order~$\prec$. Then there exists an index~$i$
  such that the sequence $a_1a_i,\ldots,a_sa_i$ is independent with
  respect to some Noetherian monomial order~$\prec'$.
\end{prop}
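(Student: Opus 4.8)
The plan is to feed $\prec$ into Robbiano's description (\lref{Robbiano}), read off the weight vector sitting in its first column, and then realize the passage from $a_1 \upto a_s$ to $a_1a_i \upto a_sa_i$ as a linear substitution on exponent vectors that ``straightens out'' this weight vector into a strictly positive one, which is exactly what forces the resulting order to be Noetherian. Concretely, let $M$ be a matrix representing $\prec$ as in \lref{Robbiano}, and let $w = (w_1 \upto w_s)$ be its first column, so that $w$ is nonzero with nonnegative entries. I fix an index~$i$ with $w_i > 0$. I then define $\prec'$ to be the pullback of $\prec$ along the additive map $\sigma \colon \NN^s \to \NN^s$, $\sigma(m) = m + (m_1 + \cdots + m_s)\,e_i$; that is, $m \prec' m''$ if and only if $\sigma(m) \prec \sigma(m'')$. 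This $\sigma$ is precisely the map induced on exponent vectors by the substitution $y_j \mapsto x_jx_i$.

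First I would check that $\prec'$ is a Noetherian monomial order. The map $\sigma$ is an injective monoid homomorphism with $\sigma(0)=0$ (injectivity follows from $\sigma(m)_k = m_k$ for $k \ne i$ together with $2m_i = \sigma(m)_i - \sum_{k \ne i}\sigma(m)_k$), so its pullback $\prec'$ is again a total order compatible with multiplication and having $1$ as least element; hence $\prec'$ is a monomial order. For Noetherianity, note that the weighted degree $\deg_w(m) = \sum_k m_kw_k$ is the first coordinate of $mM$, and a direct computation gives
\[
\deg_w\!\big(\sigma(m)\big) = \deg_w(m) + w_i(m_1 + \cdots + m_s) = \sum_{k=1}^s (w_k + w_i)\,m_k .
\]
Thus $\prec'$ compares monomials first by the weighted degree attached to $w' = (w_1 + w_i \upto w_s + w_i)$, all of whose entries are strictly positive because $w_i > 0$. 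Since only finitely many monomials can have a bounded positive weighted degree, every monomial has only finitely many $\prec'$-smaller monomials, so $\prec'$ is Noetherian.

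It then remains to transport dependence. Arguing by contrapositive, suppose $a_1a_i \upto a_sa_i$ is dependent with respect to $\prec'$: there is $g = \sum_m c_m y^m \in R[y_1 \upto y_s]$ with $g(a_1a_i \upto a_sa_i) = 0$ whose least term $\ini_{\prec'}(g)$, say with exponent $m^*$, has an invertible coefficient. I set $G(x) := g(x_1x_i \upto x_sx_i) \in R[X]$. Then $G(a_1 \upto a_s) = g(a_1a_i \upto a_sa_i) = 0$, so $G \in Q$, and $G = \sum_m c_m x^{\sigma(m)}$; since $\sigma$ is injective there is no cancellation, so $\operatorname{supp} G = \sigma(\operatorname{supp} g)$ with coefficients preserved. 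Because $\prec'$ is by construction the pullback of $\prec$ along $\sigma$, the map $\sigma$ carries the $\prec'$-least exponent $m^*$ of $g$ to the $\prec$-least exponent $\sigma(m^*)$ of $G$; hence $\ini_\prec(G)$ has the same coefficient as $\ini_{\prec'}(g)$, which is invertible. This exhibits $a_1 \upto a_s$ as dependent with respect to $\prec$, the desired contradiction.

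The main obstacle, and the real content of the statement, is exactly the interplay that makes $\prec'$ Noetherian: a monomial order is Noetherian precisely when its Robbiano weight vector $w$ has all coordinates positive, and multiplication by $a_i$ has the effect of shifting $w$ to $w + w_i\mathbf{1}$, which becomes strictly positive as soon as $w_i > 0$. The delicate point is that the single substitution $y_j \mapsto x_jx_i$ must simultaneously (i) turn the weight vector strictly positive and (ii) preserve least terms, so that dependence genuinely pulls back; defining $\prec'$ as the pullback order built from the very same $\sigma$ secures (ii) for free, while the choice $w_i > 0$ secures (i). Note that no case analysis on whether the products $a_ja_i$ vanish is needed, since the contrapositive transport is valid for an arbitrary sequence.
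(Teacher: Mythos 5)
Your proposal is correct and follows essentially the same route as the paper: extract the nonnegative, nonzero first-column weight vector from Robbiano's theorem, pick $i$ with $w_i>0$, define $\prec'$ as the pullback of $\prec$ under the substitution $x_j\mapsto x_jx_i$, deduce Noetherianity from the shifted weights $w_j+w_i>0$, and transport dependence via $g\mapsto g(x_1x_i\upto x_sx_i)$. Your added remarks on the injectivity of $\sigma$ (hence no cancellation of coefficients) only make explicit what the paper leaves implicit.
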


\begin{proof}
  By Lemma \ref{Robbiano}, there exists a real vector
  $(v_1,\ldots,v_s)$ having nonnegative components with some $v_i > 0$
  such that $x_1^{m_1} \cdots x_s^{m_s}\prec x_1^{m_1'} \cdots
  x_s^{m_s'}$ implies $\sum_{j =1}^sm_j v_j \le \sum_{j=1}^sm_j' v_j$.
  Define $\prec'$ by the rule:
  \[
  x_1^{m_1} \cdots x_s^{m_s}\ \prec' \ x_1^{m_1'}
  \cdots x_s^{m_s'}\quad \text{if} \quad (x_1x_i)^{m_1}
  \cdots (x_sx_i)^{m_s}\ \prec \ (x_1x_i)^{m_1'} \cdots
  (x_sx_i)^{m_s'}.
  \]
  It is easy to see that $\prec'$ is a monomial order. If
  $x_1^{m_1} \cdots x_s^{m_s}\ \prec' \ x_1^{m_1'}
  \cdots x_s^{m_s'}$, then $\sum_{j=1}^sm_j(v_i + v_j) \le
  \sum_{i=1}^sm_j' (v_i + v_j)$. Since $v_i+v_j > 0$ for all $j
  =1,\ldots,s$, $\prec'$ is Noetherian.

  Let $f$ be a polynomial in $R[X]$ such that $f(a_1a_i,\ldots,a_sa_i)
  = 0$. Put $g = f(x_1x_i,\ldots,x_sx_i)$. Then $\ini_\prec(g)$ has
  the same coefficient as $\ini_{\prec'}(f)$. Since $g(a_1,\ldots,a_s)
  = 0$, the coefficient of $\ini_\prec(g)$ is not invertible. This
  shows that the coefficient of $\ini_{\prec'}(f)$ is not invertible.
\end{proof}

Now we are ready to extend Lombardi's characterization of the Krull
dimension to an arbitrary monomial order.

\begin{theorem} \label{monomial order 1}%
  Let $R$ be a Noetherian ring and~$s$ a positive integer. 
  \begin{enumerate}
  \item \label{Order1A} If $s \le \dim R$, there exists a sequence
    $a_1 \upto a_s \in R$ that is independent with respect to every
    monomial order.
  \item \label{Order1B} If $s > \dim R$, every sequence $a_1 \upto a_s
    \in R$ is dependent with respect to every monomial order.
  \end{enumerate}
\end{theorem}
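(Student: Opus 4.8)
The plan is to obtain both parts of \tref{monomial order 1} as formal consequences of the weighted results of \sref{1sWeighted} together with the comparison statements \cref{Noetherian} and \pref{change}; essentially no new computation is needed. For part~\eqref{Order1A} I would argue directly. Since $s \le \dim R$, part~\eqref{weight1A} of \tref{weight1} supplies a sequence $a_1 \upto a_s \in R$ that is weighted independent with respect to every weight sequence. By part~\eqref{NoetherianA} of \cref{Noetherian}, such a sequence is automatically independent with respect to every monomial order, which is exactly the assertion.

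For part~\eqref{Order1B} the point is that \cref{Noetherian} only transfers weighted dependence to \emph{Noetherian} monomial orders, so the general case requires \pref{change}. First I would record the Noetherian case: if $s > \dim R$, then part~\eqref{weight1B} of \tref{weight1} shows that every sequence $a_1 \upto a_s$ is weighted dependent with respect to every weight sequence, and part~\eqref{NoetherianB} of \cref{Noetherian} upgrades this to dependence with respect to every Noetherian monomial order. To treat an arbitrary monomial order~$\prec$, I would argue by contradiction. Suppose some sequence $a_1 \upto a_s$ were independent with respect to~$\prec$. Then \pref{change} produces an index~$i$ and a Noetherian monomial order~$\prec'$ such that $a_1a_i \upto a_sa_i$ is independent with respect to~$\prec'$. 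But this is again a sequence of length $s > \dim R$, so the Noetherian case just established forces it to be dependent with respect to~$\prec'$, a contradiction.

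The only real obstacle is the passage from Noetherian to arbitrary monomial orders in part~\eqref{Order1B}, and this is entirely absorbed by \pref{change}: replacing the sequence by $a_1a_i \upto a_sa_i$ trades the possibly non-Noetherian order~$\prec$ for a Noetherian one while preserving both independence and the length~$s$. Once that substitution is available the contradiction closes immediately, and no further estimates are needed.
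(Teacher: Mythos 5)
Your proposal is correct and follows essentially the same route as the paper: part~\eqref{Order1A} via \tref{weight1}\eqref{weight1A} and \cref{Noetherian}\eqref{NoetherianA}, and part~\eqref{Order1B} by using \pref{change} to replace the sequence by $a_1a_i \upto a_sa_i$ and a Noetherian order, then deriving a contradiction from \tref{weight1}\eqref{weight1B} and \cref{Noetherian}\eqref{NoetherianB}. The only difference is cosmetic: you package the Noetherian case as an intermediate statement before invoking it, whereas the paper applies the contrapositive of \cref{Noetherian}\eqref{NoetherianB} directly to the modified sequence.
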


\begin{proof}
  If $s \le \dim R$, there exists a sequence $a_1 \upto a_s \in R$
  which is weighted independent with respect to every weight sequence
  by \tref{weight1}\eqref{weight1A}. By
  \cref{Noetherian}\eqref{NoetherianA}, this implies that $a_1 \upto
  a_s$ is independent with respect to every monomial order. \par

  If $s > \dim R$, every sequence $a_1 \upto a_s \in R$ is weighted
  dependent with respect to every weight sequence by
  \tref{weight1}\eqref{weight1B}.  If $a_1 \upto a_s$ is independent
  for some monomial order, then $a_1a_i \upto a_sa_i$ is independent
  with respect to some Noetherian monomial order for some $i$ by
  \pref{change}. By \cref{Noetherian}\eqref{NoetherianB}, $a_1a_i
  \upto a_sa_i$ is weighted independent with respect to some weight
  sequence, a contradiction.
\end{proof}

As a consequence, $\dim R$ is the supremum of the length of
independent sequences with respect to an arbitrary monomial order. In
the following we show how this result can be used to prove the
existence of certain relations which look like polynomial identities
in $R$. \par

Let~$\prec$ be an arbitrary monomial order.  For every term~$g$ of $R[X]$ there is a
unique set $\mathcal{M}(g)$ of monomials $h \succ g$ such that
\begin{enumerate}
  \renewcommand{\theenumi}{\roman{enumi}}
\item every monomial $u \succ g$ is divisible by a monomial of $\mathcal{M}(g)$, 
\item the monomials of $\mathcal{M}(g)$ are not divisible by each other. 
\end{enumerate}
\noindent For every polynomial $f \in R[X]$ vanishing at $a_1 \upto a_s$,  
we can always find a polynomial vanishing at $a_1 \upto a_s$ of the form 
$$g + \sum_{h \in \mathcal{M}(g)}c_hh$$
where $g = \ini_\prec(f)$ and $c_h \in R$.  To see this, one only needs
to write every term $u \succ g$  of $f$ in the form $u = vh$ for some
$h \in \mathcal{M}(g)$ and replace $u$ by the term $v(a_1 \upto
a_s)h$.  Therefore, $a_1 \upto a_s$ is a dependent sequence with respect to $\prec\ $ if and only if there exists a polynomial of the above form  vanishing at $a_1 \upto a_s$ such that the coefficient of $g$ is 1. 
Since the monomials of $\mathcal{M}(g)$ can be written down in a canonical way from the exponent vector of $g$, 
this polynomial yields an algebraic relation between elements of $R$ which are similar to a polynomial identity. 

\begin{ex}
  Let $\prec$ be the lexicographic order. For a monomial $g = x_1^{m_1} \cdots x_s^{m_s}$, $\mathcal{M}(g)$ is the set of the monomials 
$x_1^{m_1 + 1}, x_1^{m_1}x_2^{m_2+1} \upto x_1^{m_1} \cdots x_{s-1}^{m_{s-1}} x_s^{m_s +1}.$ 
Therefore, $a_1  \upto a_s$ is a dependent sequence with respect to the lexicographic order  if and only if there exists a relation of the form
$$a_1^{m_1} \cdots a_s^{m_s}+c_1a_1^{m_1 + 1}  + c_2a_1^{m_1}a_2^{m_2+1} +  \cdots + c_sa_1^{m_1} \cdots a_{s-1}^{m_{s-1}} a_s^{m_s +1} =  0,$$
where $c_1 \upto c_s \in R$. This explains why Theorem \ref{monomial order 1} is a generalization of Lombardi's result in \cite{Lombardi:2002}.  In that paper Lombardi calls $a_1 \upto a_s$ a pseudo-regular sequence if   
$$a_1^{m_1}\cdots a_s^{m_s} + c_1a_1^{m_1+1}a_2^{m_2}\cdots a_s^{m_s}+ \cdots + c_sa_1^{m_1}\cdots a_{s-1}^{m_{s-1}}a_s^{m_s+1} \neq 0$$
for all nonnegative integers $m_1 \upto m_s$ and $c_1 \upto c_s \in R$. By the above observation,
$a_1 \upto a_s$ is pseudo-regular if and only if it is independent with respect to the lexicographic order.
\end{ex}

Similarly as for weighted independent sequences, one may ask whether
$\dim R/0:J^\infty$ is the supremum of the length of independent sequences
in an ideal $J \subseteq R$ with respect to an arbitrary monomial order. 
Unlike the case of weighted independent sequences, we could not give a full answer to this question.
This shows again that independence with respect to a monomial order is more subtle than weighted independence.

\begin{prop} \label{monomial order 2}%
Let $J$ be an arbitrary ideal of $R$. The length of independent sequences in $J$ with respect to an arbitrary monomial order 
is bounded above by $\dim R/0:J^\infty$.
\end{prop}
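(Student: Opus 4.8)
The plan is to reduce the statement to the weighted case, which has already been settled in \tref{weight2}. So let $a_1 \upto a_s \in J$ be a sequence that is independent with respect to a monomial order~$\prec$; I want to show $s \le \dim R/0:J^\infty$. The temptation is to invoke \pref{compare} directly to produce a weight sequence~$\w$ with $\ini_\prec(Q) = \ini_\w(Q)$, where $Q = (x_1-a_1 \upto x_s-a_s)$, and then read off weighted independence. This works verbatim when~$\prec$ is Noetherian, but for a general monomial order \pref{compare} only yields the inclusion $\ini_\prec(Q) \subseteq \ini_\w(Q)$. That inclusion points the wrong way: enlarging the initial ideal enlarges its ideal of coefficients, which could turn a proper coefficient ideal into the unit ideal and thereby destroy independence. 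Overcoming this mismatch is the main obstacle.

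The way around it is to replace~$\prec$ by a Noetherian monomial order at the cost of altering the sequence, using \pref{change}. Thus there is an index~$i$ such that $a_1a_i \upto a_sa_i$ is independent with respect to some Noetherian monomial order~$\prec'$. The crucial point is that this modified sequence still lies in~$J$: since each $a_j$ belongs to the ideal~$J$, so does every product $a_ja_i$. Hence I have produced a sequence in~$J$, of the same length~$s$, that is independent with respect to a Noetherian order—which is exactly the setting in which \pref{compare} gives an equality rather than a mere inclusion.

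Now the Noetherian case of \pref{compare}, applied to $Q' = (x_1-a_1a_i \upto x_s-a_sa_i)$, furnishes a weight sequence~$\w$ with $\ini_{\prec'}(Q') = \ini_\w(Q')$. Since the two initial ideals coincide, so do their ideals of coefficients, and as this coefficient ideal is proper (by $\prec'$-independence) the sequence $a_1a_i \upto a_sa_i$ is weighted independent with respect to~$\w$; equivalently, one may quote the contrapositive of \cref{Noetherian}\eqref{NoetherianB}. Finally \tref{weight2} bounds the length of any weighted independent sequence in~$J$ by $\dim R/0:J^\infty$, giving $s \le \dim R/0:J^\infty$, as required. I expect the only genuine subtlety to be the one already flagged: the reduction to a Noetherian order via \pref{change} multiplies the sequence by~$a_i$, and one must verify that this keeps the sequence inside~$J$ so that \tref{weight2} is applied to the correct ideal.
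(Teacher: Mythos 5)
Your proof is correct and follows essentially the same route as the paper's: reduce to a Noetherian monomial order via \pref{change} (observing that the products $a_ja_i$ remain in $J$), pass to weighted independence via the contrapositive of \cref{Noetherian}\eqref{NoetherianB}, and conclude with \tref{weight2}. The discussion of why \pref{compare} cannot be applied directly to a non-Noetherian order is a helpful motivation but adds nothing beyond the paper's argument.
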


\begin{proof}
Let $a_1,\ldots,a_s$ be an independent sequence in $J$ with respect to an arbitrary monomial order $\prec$.  By Lemma \ref{change}, $a_1a_i,\ldots,a_sa_i$ is an independent sequence with respect to some Noetherian monomial order for some $i$. By Corollary \ref{Noetherian}, $a_1a_i,\ldots,a_sa_i$ is weighted independent for some weight sequence. Since $a_1a_i,\ldots,a_sa_i \in J$,  $s \le \dim R/0:J^\infty$ by Theorem \ref{weight2}. 
\end{proof}

\section{Generalization to monomial preorders} \label{3sPreorder}

In the previous sections we have considered weight sequences and
monomial orders, and shown analogous results in both cases. 
So one may ask whether there is a common generalization of these results.
We shall see that the following notion provides the platform for 
such a generalization.  \par

Recall that a {\em strict weak order} is a binary relation~$\prec$ on
a set $M$ such that for $f,g,h \in M$ with $f \prec g$ we have:
\begin{enumerate}
  \renewcommand{\theenumi}{\roman{enumi}}
\item \label{Preorder1} $f \prec h$ or $h \prec g$, and
\item \label{Preorder2} $g \not\prec f$ (i.e., $g \prec f$ does not hold).
\end{enumerate}
This is equivalent to say that $\prec$ is a strict partial order   
in which the incomparability relation (given by $f \not\prec g$ and $g \not\prec
f$) is an equivalence relation and the equivalence classes of
incomparable elements are totally ordered.
%These laws imply (and are, in fact, equivalent to) that the
%incomparability relation (given by $f \not\prec g$ and $g \not\prec
%f$) is an equivalence relation and the equivalence classes of
% incomparable elements are totally ordered.

We call a strict weak order~$\prec$ on the set of monomials of the
variables $x_1,x_2, \ldots$ a {\em monomial preorder} if it satisfies
the following conditions:
\begin{enumerate}
  \renewcommand{\theenumi}{\roman{enumi}}
  \addtocounter{enumi}{2}
\item \label{Preorder3} $1 \prec f$ for all monomials $f \ne 1$, and
 \item \label{Preorder4} for all monomials $f,g,h$ the equivalence
  \[
  f \prec g \quad \Longleftrightarrow \quad f h \prec g h
  \]
  holds.
\end{enumerate}
Notice that the actual preorder~$\precsim$ is given by $f \precsim g \
\Leftrightarrow g \not\prec f$, not by $f \prec g$. This slight
inaccuracy in terminology follows common practice in Gr\"obner basis
theory. \par

Obviously, every monomial order is a monomial preorder.  A weight
sequence $\w = w_1, w_2,\ldots$ gives rise to a preorder $\prec_\w$ by
comparing their weighted degree, i.e.
     \[
    \prod_i x_i^{m_i} \prec_\w \prod_i x_i^{m_i'} \quad \text{if} \quad
    \sum_i m_iw_i  < \sum_i m_i'w_i .
    \]
We call this the $\w$-weighted preorder. The following example shows that monomial preorders 
are much more general than monomial orders and weighted preorders. 

\begin{ex}
  Let $M$ be a real matrix of $s$ rows such that the first column is nonzero with nonnegative entries and every row is nonzero with the first nonzero entry positive. Then $M$ defines a monomial preorder in a polynomial ring of $s$ variables by
\[
    f  \prec\ g \quad \text{if} \quad \exp(f)  \cdot M  <_{\lex} \exp(g) \cdot M,
    \]
    where $f, g$ are monomials, $\exp(f)$ and
    $\exp(g)$ denote the exponent vectors of $f,g$, and $<_{\lex}$ is the
    lexicographic order. Note that the assumption on the rows of $M$ is equivalent to \eqref{Preorder3}. 
   Then every monomial order  arises in such a way by \lref{Robbiano}. 
If $M$ has only one column and if its entries are positive integers, then we get a weighted
    preorder.
\end{ex}

\begin{lemma} \label{3lRefine}%
  Every monomial preorder $\prec$ can be refined to a monomial
  order~$\prec^*$, i.e. $f \prec g$ implies $f  \prec^* g$.
\end{lemma}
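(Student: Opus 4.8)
The plan is to refine $\prec$ by breaking ties \emph{within} its incomparability classes using any fixed auxiliary monomial order. First I would isolate the structural facts already encoded in the definition of a monomial preorder. By the description of a strict weak order recalled in the text, the incomparability relation $f \sim g$ (defined by $f \not\prec g$ and $g \not\prec f$) is an equivalence relation, and its classes are totally ordered by the order induced from $\prec$. The crucial additional observation is that $\sim$ is a \emph{congruence} on the monoid of monomials: if $f \sim g$ and we had $fh \prec gh$, then \eqref{Preorder4} would force $f \prec g$, a contradiction, and symmetrically $gh \prec fh$ is impossible; hence $fh \sim gh$. Thus multiplication descends to a well-defined, order-preserving operation on $\sim$-classes.

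Next I would fix any monomial order $\prec_0$ on the monomials of $x_1,x_2,\ldots$ (for instance the lexicographic order) and define $\prec^*$ as the lexicographic combination of $\prec$ and $\prec_0$:
\[
f \prec^* g \quad \Longleftrightarrow \quad f \prec g, \quad \text{or} \quad f \sim g \text{ and } f \prec_0 g.
\]
Because the $\sim$-classes are totally ordered by $\prec$ and $\prec_0$ is itself a total order, this combination is a strict total order on monomials: for $f \ne g$, either the classes of $f$ and $g$ differ, in which case $\prec$ decides, or $f \sim g$, in which case $\prec_0$ decides; irreflexivity and transitivity are the standard properties of the lexicographic product of two strict total orders, using transitivity of $\sim$ and of $\prec_0$ in the tie case and property \eqref{Preorder1} to handle mixed cases. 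By construction $f \prec g$ implies $f \prec^* g$, so $\prec^*$ refines $\prec$.

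It then remains to verify that $\prec^*$ is a monomial order, i.e. that it satisfies \eqref{Preorder3} and \eqref{Preorder4}. Condition \eqref{Preorder3} is immediate, since for $f \ne 1$ we have $1 \prec f$ by \eqref{Preorder3} for $\prec$, whence $1 \prec^* f$. For \eqref{Preorder4}, suppose $f \prec^* g$ and let $h$ be an arbitrary monomial. If $f \prec g$, then $fh \prec gh$ by \eqref{Preorder4} for $\prec$, so $fh \prec^* gh$. If instead $f \sim g$ and $f \prec_0 g$, then $fh \sim gh$ by the congruence property above, while $fh \prec_0 gh$ because $\prec_0$ is a monomial order; hence $fh \prec^* gh$ as well (note $fh \ne gh$, the monoid of monomials being cancellative). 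This establishes that $\prec^*$ is a monomial order refining $\prec$.

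The only step requiring genuine care is the compatibility of the tie-breaking with multiplication, which is precisely the assertion that $\sim$ is a congruence; this is exactly where \eqref{Preorder4} for $\prec$ is used in an essential way. Once this is secured, the remainder reduces to the elementary fact that a lexicographic product of strict total orders is again a strict total order, together with the trivial verification of \eqref{Preorder3}.
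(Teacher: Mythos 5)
Your proof is correct and follows exactly the paper's approach: the paper also fixes an auxiliary monomial order and uses it to break ties within the incomparability classes, leaving the verification as "straightforward to check." Your write-up simply supplies the details the paper omits, in particular the useful observation that the incomparability relation is a congruence for multiplication, which is indeed the only point needing care.
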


\begin{proof}
  We choose an arbitrary monomial ordering~$\prec'$ and use it to
  break ties in the equivalence classes of incomparable elements. More
  precisely, we define $f \prec^* g$ if $f \prec g$ or if $f,g$ is
  incomparable and $f \prec' g$.  It is straightforward to check
  that~$\prec^*$ is a monomial order and refines~$\prec$.
\end{proof}

A monomial preorder can be approximated by a weighted preorder by the
following lemma, which is well-known in the case of monomial orders
(\lref{approximate}). 
% To prove this lemma is a challenge because the weight sequence $\w$
% have to be chosen such that incomparable monomials with respect
% to~$\prec$ have the same weighted degree.

\begin{lemma} \label{lCone}%
  Let~$\prec$ be a monomial preorder and let $\mathcal{M}$ be a finite
  set of monomials.  Then there exists a weight sequence $\w$ such
  that the restrictions of~$\prec$ and~$\prec_\w$ to $\mathcal{M}$
  coincide.
\end{lemma}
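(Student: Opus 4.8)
The plan is to reduce the statement to a linear feasibility problem over the finitely many variables occurring in $\mathcal{M}$ and to solve that problem by a theorem of the alternative. Since $\mathcal{M}$ is finite, only variables $x_1 \upto x_n$ occur in it, so I work with exponent vectors in $\NN^n$ (writing $x^u$ for the monomial with exponent vector $u$) and look for positive integers $w_1 \upto w_n$; the full weight sequence $\w$ is then obtained by appending $w_i = 1$ for $i > n$, which does not change the degrees of monomials in $\mathcal{M}$. Writing $u \sim v$ for incomparability under $\prec$, the equivalence classes of the finitely many exponent vectors occurring in $\mathcal{M}$ are totally ordered, so it suffices to find a vector $w = (w_1 \upto w_n)$ with positive entries such that $\langle w, u - v\rangle = 0$ whenever $u \sim v$ and $\langle w, v - u\rangle > 0$ whenever $x^u \prec x^v$. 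Let $D_=$ be the set of differences $u - v$ with $u \sim v$, and $D_<$ the set of differences $v - u$ with $x^u \prec x^v$; I also adjoin the standard basis vectors $e_1 \upto e_n$ to the strict part, so as to force $w_i = \langle w, e_i\rangle > 0$.

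The key device is an auxiliary totally ordered abelian group that records $\prec$ exactly. By axiom~(iv) together with the strict-weak-order axioms, incomparability is a congruence on the monoid of monomials, and the quotient $\NN^n/\!\sim$ is a cancellative commutative monoid, totally ordered by the descended relation, with the class of $1$ as least element by~(iii). Its group of differences $\Gamma$ therefore carries a compatible total order, and the induced homomorphism $\phi\colon \ZZ^n \to \Gamma$ with $\phi(e_i) = [x_i]$ satisfies $\phi(u) < \phi(v) \Leftrightarrow x^u \prec x^v$ and $\phi(u) = \phi(v) \Leftrightarrow u \sim v$. In particular $\phi(d) > 0$ for every $d \in D_< \cup \{e_1 \upto e_n\}$, using $1 \prec x_i$ for the basis vectors, while $\phi(d) = 0$ for every $d \in D_=$.

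Now I would invoke a mixed theorem of the alternative (Motzkin's transposition theorem): the system ``$\langle w, d\rangle = 0$ for $d \in D_=$ and $\langle w, d\rangle > 0$ for $d \in D_< \cup \{e_i\}$'' is solvable over $\RR$ unless there is a nontrivial relation $\sum_{d \in D_< \cup \{e_i\}} \lambda_d\, d + \sum_{d \in D_=} \mu_d\, d = 0$ with all $\lambda_d \ge 0$ not all zero. Because the data are integral, such a relation could be chosen with integer coefficients; applying the group homomorphism $\phi$ would then give $\sum \lambda_d\,\phi(d) = 0$ in $\Gamma$, a sum of nonnegative elements containing at least one strictly positive summand, i.e.\ a strictly positive element equal to $0$, which is absurd. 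This rules out any certificate, so a real solution $w$ exists; since the defining equalities and inequalities are rational, $w$ may be taken rational and then scaled to a vector of positive integers. These are the required weights, and by construction $\prec$ and $\prec_\w$ agree on $\mathcal{M}$.

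I expect the main obstacle to be precisely the simultaneous handling of the equalities forced by incomparable monomials and the strict inequalities between comparable ones: this is what makes a single weight sequence, rather than a refinement to a monomial order (\lref{3lRefine}), necessary, and it is why the argument needs both the ordered-group construction, to read off the signs of $\phi$ on $D_<$, $D_=$ and the $e_i$, and a version of the theorem of the alternative allowing equalities alongside strict inequalities. A secondary point requiring care is the strict positivity of the weights, which is ensured by adjoining the $e_i$ to the strict constraints and observing $\phi(e_i) > 0$ by virtue of axiom~(iii).
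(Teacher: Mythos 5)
Your proof is correct, but it takes a genuinely different route from the paper's. The paper reduces to the same finite system of sign conditions (equalities on differences of incomparable exponent vectors, strict inequalities on differences of comparable ones together with the standard basis vectors $e_1\upto e_s$), but it produces the weight vector by an explicit separation argument: after showing that the positive cone $\mathcal{P}^+$ is disjoint from the linear span $\mathcal{N}^*$ of the incomparability differences, it takes $w=u-v$ for a nearest pair between the compact convex hull of the finitely many relevant positive vectors and the subspace $\mathcal{N}^*$, and reads off orthogonality to $\mathcal{N}^*$ and strict positivity on the hull from minimality of the distance. You instead delegate existence to Motzkin's transposition theorem and refute the dual certificate. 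The two arguments use the compatibility axiom (iv) in parallel ways: the paper converts a real witness of $\mathcal{P}^+\cap\mathcal{N}^*\neq\emptyset$ into an integral one, uses closure of $\mathcal{P}$ and $\mathcal{N}$ under addition (resp.\ addition and subtraction) to get $fk=gh$ with $g\prec f$ and $h,k$ incomparable, and contradicts (iv) directly; you package exactly these closure properties into the totally ordered Grothendieck group $\Gamma$ of the quotient monoid by incomparability and obtain the contradiction from positivity of $\phi$ on the strict differences. Your route is shorter if one is willing to quote Motzkin's theorem and the standard embedding of a cancellative totally ordered commutative monoid into a totally ordered abelian group; the paper's is self-contained, needing only the rational-point argument and elementary Euclidean geometry. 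Both hinge on the same two bookkeeping steps you flag: passing from real to integer data (for your certificate, for the paper's $w$ itself) and adjoining the $e_i$, justified by axiom (iii), to force positive weights. One small point to make explicit: your sets $D_=$ and $D_<$ must range only over pairs from $\mathcal{M}$ (plus the $e_i$) so that the system handed to Motzkin's theorem is finite; the reader has to infer this from context in your write-up.
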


\begin{proof}
  Assume that $\mathcal{M}$ is a set of monomials in $s$ variables
  $x_1 \upto x_s$. 
 We consider the ``positive cone''
  \[
  \mathcal{P} := \left\{\exp(f) - \exp(g) \mid f,g \ \text{are
      monomials such that} \ g \prec f\right\} \subseteq \ZZ^s
  \]
  and the ``nullcone''
  \[
  \mathcal{N} := \left\{\exp(f) - \exp(g) \mid f,g \ \text{are
      incomparable monomials} \right\} \subseteq \ZZ^s.
  \]
  We also consider the sets
  \[
  \mathcal{P}^+ := \left\{\sum_{i=1}^n \alpha_i u_i
    \left|\strut\right. n \in \NN_{>0}, \ u_i \in \mathcal{P}, \
    \alpha_i \in \RR_{>0}\right\} \subseteq \RR^s
  \]
  and
  \[
  \mathcal{N}^* := \left\{\sum_{i=1}^n \alpha_i v_i
    \left|\strut\right. n \in \NN_{>0}, \ v_i \in \mathcal{N}, \
    \alpha_i \in \RR\right\} \subseteq \RR^s.
  \]
  Assume that $\mathcal{P}^+ \cap \mathcal{N}^* \ne \emptyset$. Then
  there exist vectors $u_1 \upto u_n \in \mathcal{P}$ and $v_1 \upto
  v_m \in \mathcal{N}$ and real numbers $\alpha_1 \upto \alpha_n \in
  \RR_{>0}$ and $\beta_1 \upto \beta_m \in \RR$ such that
  \begin{equation} \label{eqAlphabeta}%
    \sum_{i=1}^n \alpha_i u_i - \sum_{j=1}^m \beta_j v_j = 0.
  \end{equation}
  So $(\alpha_1 \upto \alpha_n,\beta_1 \upto \beta_m) \in \RR^{n+m}$
  is a solution of a system of linear equations with coefficients in
  $\ZZ$ that satisfies the additional positivity conditions $\alpha_i
  > 0$. The existence of a solution in $\RR^{n+m}$ satisfying the
  positivity conditions implies that there also exists a solution in
  $\QQ^{n+m}$ satisfying these conditions. So we may assume $\alpha_i
  \in \QQ_{> 0}$ and $\beta_i \in \QQ$, and then, by multiplying by a
  common denominator, $\alpha_i \in \NN_{> 0}$ and $\beta_i \in
  \ZZ$. It follows from the definition of a monomial preorder that
  $\mathcal{P}$ is closed under addition and that $\mathcal{N}$ is
  closed under addition and subtraction. Therefore, $\sum_{i=1}^n
  \alpha_i u_i \in \mathcal{P}$ and $\sum_{j=1}^m \beta_j v_j \in
  \mathcal{N}$. Hence~\eqref{eqAlphabeta} implies $\mathcal{P} \cap
  \mathcal{N} \ne \emptyset$. So there exist monomials $g \prec f$ and
  incomparable monomials $h,k$ such that
  \[
  \exp(f) - \exp(g) = \exp(h) - \exp(k).
  \]
  This implies $f k = g h$. By condition~\eqref{Preorder4} of the
  definition of a monomial preorder, $g h$ and $g k$ are incomparable,
  hence so are $fk, gk$. This implies that $f , g$ are incomparable, a
  contradiction. Thus, we must have $\mathcal{P}^+ \cap \mathcal{N}^*
  = \emptyset$.

  Now we form the finite set
  \[
  \mathcal{T} := \left\{\exp(f) - \exp(g) \mid f,g \in \mathcal{M}, \
    g \prec f\right\} \cup \{e_1 \upto e_s\},
  \]
  where $e_1 \upto e_s \in \RR^s$ are the standard basis vectors. Then
  $\mathcal{T} \subseteq \mathcal{P}$ since $1 \prec x_i$ for
  all~$i$. We write $T = \{u_1 \upto u_n\}$ and form the convex hull
  \[
  \mathcal{H} := \left\{\sum_{i=1}^n \alpha_i u_i
    \left|\strut\right. \alpha_i \in \RR_{\ge 0}, \ \sum_{i=1}^n
    \alpha_i = 1\right\} \subseteq \mathcal{P}^+.
  \]
  Since $\mathcal{H}$ is a compact subset of $\RR^s$ and
  $\mathcal{N}^*$ is a linear subspace, there exist $u \in
  \mathcal{H}$ and $v \in \mathcal{N}^*$ such that the Euclidean
  distance between~$u$ and~$v$ is minimal. \par

  Set $w := u - v$. Then
  \begin{equation} \label{eqPerp}%
    w \in \left(\mathcal{N}^*\right)^\perp
  \end{equation}
  (the orthogonal complement), since otherwise there would be points
  in $\mathcal{N}^*$ that are closer to~$u$ than~$v$. Set $d :=
  \langle w,w \rangle$, where $\langle \cdot,\cdot\rangle$ denotes the
  standard scalar product. From $\mathcal{P}^+ \cap \mathcal{N}^* =
  \emptyset$ we conclude that $d > 0$. Moreover, \eqref{eqPerp}
  implies $\langle w,u\rangle = \langle w,u - v\rangle = d$. Take $u'
  \in \mathcal{H}$ arbitrary. Then for every $\alpha \in \RR$ with $0
  \le \alpha \le 1$ the linear combination $u + \alpha (u' - u)$ also
  lies in $\mathcal{H}$, so
  \begin{align*}
    d \le \langle u + \alpha (u' - u) - v,u + \alpha (u' - u) -
    v\rangle & = \langle w + \alpha (u' - u) , w+ \alpha (u' - u)
    \rangle \\
    & = d + 2 \alpha \left(\langle w,u'\rangle - d\right) + \alpha^2
    \langle u' - u,u' - u\rangle.
  \end{align*}
  Since this holds for arbitrarily small~$\alpha$, we conclude
  $\langle w,u'\rangle \ge d > 0$. In particular,
  \begin{equation} \label{eqPositive}
    \langle w,u_i\rangle > 0 \quad \text{for} \quad i = 1 \upto n.
  \end{equation}
  Since $\mathcal{N}^*$ has a basis in $\ZZ^s$, the existence of a
  vector $w \in \RR^s$ satisfying~\eqref{eqPerp}
  and~\eqref{eqPositive} implies that there also exists such a vector
  in $\QQ^s$, and then even in $\ZZ^s$. So we may assume $w \in \ZZ^s$
  and retain~\eqref{eqPerp} and~\eqref{eqPositive}. Since the standard
  basis vectors $e_j$ occur among the $u_i$, \eqref{eqPositive}
  implies that $w$ has positive components.

  Let $\w = w_1,w_2,\ldots$ be a weight sequence starting with $w_1
  \upto w_s$ chosen above. Let $f,g$ be two arbitrary monomials of
  $\mathcal{M}$. Then $f \prec_\w g$ if and only if $\langle
  w,\exp(f)\rangle < \langle w,\exp(g)\rangle$. If~$f$ and~$g$ are
  incomparable with respect to~$\prec$, then $\exp(f) - \exp(g) \in
  \mathcal{N}^*$, hence $\langle w,\exp(f)\rangle = \langle
  w,\exp(g)\rangle$ by~\eqref{eqPerp}.  This implies that~$f$ and~$g$
  are incomparable with respect to~$\prec_\w$. If $f \prec g$, then
  $\exp(g) - \exp(f) \in \mathcal{T}$, hence $\langle
  w,\exp(g)-\exp(f)\rangle > 0$ by~\eqref{eqPositive}. This implies
  that $f \prec_\w g$. So we can conclude that~$\prec$ and~$\prec_\w$
  coincide on $\mathcal{M}$.
 \end{proof}

\begin{remark*}
  It is clear that any binary relation on the set of monomials
  satisfying the assertion of \lref{lCone} is a monomial
  preorder. Since the lemma is crucial for obtaining the results of
  this section, this shows that we are working in just the right
  generality.
\end{remark*}

Let $R$ be a Noetherian ring and $R[X] := R[x_1 \upto x_s]$.
Let~$\prec$ be a monomial preorder. For a polynomial $f \in R[X]$ we
define $\ini_\prec(f)$ to be the sum of all terms of~$f$ that are
associated with the minimal monomials appearing in~$f$. As in the
previous sections, we call a sequence $a_1 \upto a_s \in R$ {\em
  dependent} with respect to~$\prec$ if there exists a polynomial $f
\in R[X]$ vanishing at $a_1 \upto a_s$ such that $\ini_\prec(f)$ has
at least one invertible coefficient. Otherwise, the sequence is called
{\em independent} with respect to~$\prec$.  These notions cover both
weighted (in-)dependent sequences and (in-)dependent sequences with
respect to a monomial order. \par

The following result allows us to reduce the study of these notions to
weighted independent sequences and dependent sequences with respect to
a monomial order.

\begin{prop} \label{3pEquiv}%
  Let $a_1 \upto a_s \in R$ be a sequence of elements.
  \begin{enumerate}
  \item \label{3pEquivA} The sequence is independent with respect to
    every monomial preorder if it is weighted independent
    with respect to every weight sequence.
  \item \label{3pEquivB} The sequence is dependent with respect to
    every monomial preorder if it is dependent with
    respect to every monomial order.
  \end{enumerate}
\end{prop}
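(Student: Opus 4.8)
The plan is to prove each implication by \emph{approximating} the given monomial preorder $\prec$ by a suitable object (a weight sequence or a monomial order) on a finite set of monomials large enough to detect the initial form of the relevant polynomial, and then invoke the corresponding hypothesis. The key observation is that the notion of (in-)dependence of a \emph{fixed} sequence $a_1\upto a_s$ is witnessed by a \emph{single} polynomial $f\in R[X]$ vanishing at the sequence, and hence only involves the finitely many monomials that actually occur in that $f$. This is exactly the situation in which \lref{lCone} (for part~\eqref{3pEquivA}) and \lref{3lRefine} (for part~\eqref{3pEquivB}) apply.

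For part~\eqref{3pEquivA}, I would argue by contraposition: suppose the sequence is \emph{dependent} with respect to some monomial preorder $\prec$, so there is a polynomial $f\in R[X]$ vanishing at $a_1\upto a_s$ such that $\ini_\prec(f)$ has an invertible coefficient. Let $\mathcal{M}$ be the (finite) set of monomials appearing in $f$. By \lref{lCone} there is a weight sequence $\w$ whose induced preorder $\prec_\w$ agrees with $\prec$ on $\mathcal{M}$. Since $\prec$ and $\prec_\w$ rank the monomials of $\mathcal{M}$ identically, the minimal monomials of $f$ under $\prec_\w$ are precisely those under $\prec$, so $\ini_\w(f)=\ini_\prec(f)$. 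Therefore $\ini_\w(f)$ has an invertible coefficient, and $f$ still vanishes at the sequence, witnessing that the sequence is weighted dependent with respect to $\w$. This contradicts the hypothesis that the sequence is weighted independent with respect to every weight sequence.

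For part~\eqref{3pEquivB}, I would argue directly. Assume the sequence is dependent with respect to every monomial order; I must show it is dependent with respect to every monomial preorder $\prec$. By \lref{3lRefine}, refine $\prec$ to a genuine monomial order $\prec^*$, so that $f\prec g$ implies $f\prec^* g$. By hypothesis the sequence is dependent with respect to $\prec^*$, so there is a polynomial $f\in R[X]$ vanishing at the sequence with $\ini_{\prec^*}(f)$ invertible. The subtle point is that $\ini_{\prec^*}(f)$ is a single term (the $\prec^*$-least), whereas $\ini_\prec(f)$ is the sum of all terms whose monomials are $\prec$-minimal in $f$. Because $\prec^*$ refines $\prec$, the monomial of $\ini_{\prec^*}(f)$ is one of the $\prec$-minimal monomials of $f$, hence it is one of the terms summed in $\ini_\prec(f)$; thus $\ini_\prec(f)$ has that same invertible coefficient among its coefficients. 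This shows the sequence is dependent with respect to $\prec$, as required.

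The main obstacle is the mismatch in the definition of $\ini_\prec$ across the three settings. For a weight sequence or a refined monomial order one is comparing a single extracted form against a sum of $\prec$-minimal terms, so I must be careful that ``agreement on $\mathcal{M}$'' in part~\eqref{3pEquivA} forces equality of the full initial \emph{forms} (not merely of the leading monomial), and that the refinement in part~\eqref{3pEquivB} picks out a monomial genuinely among the $\prec$-minimal ones rather than merely below them. In both cases the compatibility built into \lref{lCone} and \lref{3lRefine}---preservation of incomparability classes in the former, preservation of strict inequalities in the latter---is exactly what guarantees the required transfer of the invertible coefficient, so once these are invoked correctly the argument closes.
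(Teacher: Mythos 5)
Your proof is correct and follows essentially the same route as the paper: part (a) applies \lref{lCone} to the finitely many monomials of the witnessing polynomial to get $\ini_\w(f)=\ini_\prec(f)$, and part (b) refines $\prec$ to a monomial order via \lref{3lRefine} and observes that the $\prec^*$-least term of $f$ is necessarily one of the terms of $\ini_\prec(f)$. The only difference is that you phrase both implications in contrapositive/direct form where the paper uses proof by contradiction, which is immaterial.
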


\begin{proof}
  (a) Assume that $a_1 \upto a_s$ is weighted independent with respect
  to every weight sequence. If $a_1 \upto a_s$ is dependent with
  respect to some monomial preorder~$\prec$, there is a polynomial $f
  \in R[X]$ vanishing at $a_1 \upto a_s$ such that $\ini_\prec(f)$ has
  an invertible coefficient. By \lref{lCone} there exists a weight
  sequence~$\w$ such that $\ini_\prec(f) = \ini_\w(f)$. So $a_1 \upto
  a_s$ is weighted dependent with respect to~$\w$, a contradiction.

  (b) Assume that $a_1 \upto a_s$ is dependent with respect to every
  monomial order. If $a_1 \upto a_s$ is independent with respect to
  some monomial preorder~$\prec$, we use \lref{3lRefine} to find a
  monomial order~$\prec^*$ that refines~$\prec$. If $f \in R[X]$ is a
  polynomial vanishing at $a_1 \upto a_s$, then $\ini_\prec(f)$ has no
  invertible coefficient. Since the least term $\ini_{\prec^*}(f)$
  of~$f$ is minimal with respect to~$\prec^*$, is is also minimal with
  respect to~$\prec$, so it is a term of $\ini_\prec(f)$. Therefore
  the coefficient of $\ini_{\prec^*}(f)$ is not invertible. But this
  means that the sequence is independent with respect to~$\prec^*$, a
  contradiction.
\end{proof}

Combining \pref{3pEquiv} with Theorems~\ref{weight1}\eqref{weight1A}
and~\ref{monomial order 1}\eqref{Order1B}, we obtain the following
generalization of the main results of the two previous sections.

\begin{theorem} \label{3tPreorder}%
  Let $R$ be a Noetherian ring and~$s$ a positive integer.
  \begin{enumerate}
  \item \label{3tPreorderA} If $s \le \dim R$, there exists a sequence
    $a_1 \upto a_s \in R$ that is independent with respect to every
    monomial preorder.
  \item \label{3tPreorderB} If $s > \dim R$, every sequence $a_1 \upto a_s
    \in R$ is dependent with respect to every monomial preorder.
  \end{enumerate}
\end{theorem}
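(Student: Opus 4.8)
The statement is the analogue for monomial preorders of the main theorems for weight sequences (\tref{weight1}) and monomial orders (\tref{monomial order 1}), and the natural plan is to reduce directly to those two results via \pref{3pEquiv}. The two parts are handled separately, each by a short implication chain. For part~\eqref{3tPreorderA}, I would start from \tref{weight1}\eqref{weight1A}: since $s \le \dim R$, there is a sequence $a_1 \upto a_s \in R$ that is weighted independent with respect to \emph{every} weight sequence. By \pref{3pEquiv}\eqref{3pEquivA}, any sequence that is weighted independent with respect to every weight sequence is independent with respect to every monomial preorder. This immediately yields the desired sequence, so part~\eqref{3tPreorderA} follows with essentially no extra work.

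For part~\eqref{3tPreorderB}, I would argue in the contrapositive spirit of \pref{3pEquiv}\eqref{3pEquivB}. Since $s > \dim R$, \tref{monomial order 1}\eqref{Order1B} tells us that every sequence $a_1 \upto a_s \in R$ is dependent with respect to every monomial order. Now apply \pref{3pEquiv}\eqref{3pEquivB}: a sequence that is dependent with respect to every monomial order is dependent with respect to every monomial preorder. Hence every sequence $a_1 \upto a_s$ is dependent with respect to every monomial preorder, which is exactly the claim. Thus the whole theorem reduces to quoting the two earlier section results together with the corresponding parts of \pref{3pEquiv}.

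The reason this reduction is clean, and the place where all the genuine content sits, is \pref{3pEquiv}, whose proof in turn rests on \lref{lCone} (approximation of a preorder by a weight sequence on a finite set of monomials) and \lref{3lRefine} (refinement of a preorder to a monomial order). I do not expect any real obstacle in the theorem itself; the delicate point has already been dispatched in \lref{lCone}, namely choosing the weight vector~$w$ so that monomials incomparable under~$\prec$ receive equal weight while strictly smaller monomials receive strictly smaller weight. Given those lemmas, the only thing to be careful about is bookkeeping: in part~\eqref{3tPreorderA} one must invoke the ``every weight sequence'' strength of \tref{weight1}\eqref{weight1A} so that \pref{3pEquiv}\eqref{3pEquivA} applies, and in part~\eqref{3tPreorderB} one must use the ``every monomial order'' strength of \tref{monomial order 1}\eqref{Order1B} so that \pref{3pEquiv}\eqref{3pEquivB} applies. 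With those quantifiers lined up correctly, the proof is a two-line citation for each part.
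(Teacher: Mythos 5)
Your proposal is correct and matches the paper exactly: the paper states Theorem~\ref{3tPreorder} as an immediate consequence of combining \pref{3pEquiv} with \tref{weight1}\eqref{weight1A} and \tref{monomial order 1}\eqref{Order1B}, which is precisely your two implication chains. The quantifier bookkeeping you flag is indeed the only point of care, and you have it right.
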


As a consequence, $\dim R$ is the supremum of the length of
independent sequences with respect to an arbitrary monomial preorder.

\section{Algebras over a Jacobson ring} \label{4sJacobson}

In this section we extend our investigation to algebras over a ring. 
Our aim is to generalize the characterization of the Krull dimension 
of algebras over a field by means of the transcendence degree.

Let $A$ be an algebra over a ring $R$. Given a monomial
preorder~$\prec$, we say that a sequence $a_1 \upto a_s$ of elements
of $A$ is {\em dependent over} $R$ with respect to~$\prec$ if there
exists a polynomial $f \in R[X] := R[x_1 \upto x_s]$ vanishing at $a_1
\upto a_s$ such that $\ini_\prec(f)$ has at least one coefficient that
is invertible in $R$. Otherwise, the sequence is called {\em
  independent over} $R$ with respect to~$\prec$. Note that if $R$ is a
field, these are just the usual notions of algebraic dependence and
independence, and they do not depend on the choice of the monomial
preorder.  In this case, it is well known that $\dim A$ is equal to
the transcendence degree of $A$ over $R$.  So we may ask whether $\dim
A$ is equal to the maximal length of independent sequences over $R$
with respect to~$\prec$. \par

The following example shows that this question has a negative answer
in general.

\begin{ex}
  Let $R$ be an one-dimensional local domain. Let $A = R[a^{-1}]$,
  where $a \neq 0$ is an
  element in the maximal ideal of $R$. Then $\dim A = 0$, whereas $a$
  is an independent element over $R$ with respect to every monomial
  preorder. (In fact, there exists only one monomial preorder in just
  one variable.)
\end{ex}

We shall see that the above question has a positive answer if $R$ is a
Noetherian Jacobson ring.  Recall that $R$ is called a {\em Jacobson
  ring} (or Hilbert ring) if every prime of $R$ is the intersection of
maximal ideals.  It is well known that every finitely generated
algebra over a field is a Jacobson ring (see Eisenbud~\cite[Theorem
4.19]{eis}).  More examples are given by tensor products of extensions
of a field with finite transcendence degree \cite{Trung:84}. \par

Clearly, $R$ is a Jacobson ring if and only if every nonmaximal prime
$P$ of $R$ is the intersection of primes $P' \supset P$ with
$\height(P'/P) = 1$. Therefore, the following lemma will be useful in
studying Jacobson rings.  This lemma seems to be folklore. Since we
could not find any references in the literature, we provide a proof
for the convenience of the reader.

\begin{lemma} \label{4lNgo}%
  Let $R$ be a Noetherian ring and $P$ a nonmaximal prime of $R$.
  \begin{enumerate}
  \item \label{4lNgoA} For every prime $Q \supset P$ with
    $\height(Q/P) \ge 2$, there exist infinitely many primes $P'$ with
    $P \subseteq P' \subseteq Q$ and $\height(P'/P) = 1$ in $Q$.
  \item \label{4lNgoB} If $\mathcal{M}$ is a set of primes $P' \supset
    P$ with $\height(P'/P) = 1$, then $P = \cap_{P' \in \mathcal{M}}
    P'$ if and only if $\mathcal{M}$ is infinite.
  \end{enumerate}
\end{lemma}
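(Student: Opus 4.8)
The plan is to prove Lemma~\ref{4lNgo} by reducing to a standard fact about Noetherian domains of dimension at least two having infinitely many height-one primes. After passing to the quotient ring $R/P$, which is a Noetherian domain, the statement becomes a statement about height-one primes of a Noetherian domain lying inside a given prime of height at least two.

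For part~\eqref{4lNgoA}, I would replace $R$ by $R/P$ and $Q$ by $Q/P$, so that we work in a Noetherian domain in which $Q$ has height $\ge 2$. The task is to produce infinitely many height-one primes contained in $Q$. First I would note that there is at least one such prime: pick any nonzero $a \in Q$, and by Krull's principal ideal theorem every minimal prime over $(a)$ has height one; at least one of these, say $P'$, is contained in $Q$ since $a \in Q$ forces $Q$ to contain some minimal prime of $(a)$. To get infinitely many, I would argue by contradiction. Suppose there were only finitely many height-one primes $P_1 \upto P_k$ contained in $Q$. The key obstacle is to derive a contradiction from finiteness; the natural tool is prime avoidance combined with the principal ideal theorem. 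Since $\height(Q) \ge 2$, $Q$ is not equal to any single $P_j$, so by prime avoidance $Q \not\subseteq \bigcup_{j=1}^k P_j$. Hence I can choose an element $a \in Q$ with $a \notin P_j$ for all $j$. Every minimal prime over $(a)$ has height one by the principal ideal theorem, and since $a \in Q$, at least one such minimal prime $P'$ is contained in $Q$. But then $P'$ is a height-one prime inside $Q$, so $P' = P_j$ for some $j$, contradicting $a \notin P_j$ while $a \in P'$. This forces infinitely many height-one primes in $Q$.

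For part~\eqref{4lNgoB}, the ``only if'' direction is immediate: if $P = \bigcap_{P' \in \mathcal{M}} P'$ with each $P' \supsetneq P$, then $\mathcal{M}$ cannot be finite, because a finite intersection of primes strictly containing $P$ cannot equal $P$ in a Noetherian ring (if it were finite, $P$ would be one of the $P'$ by prime avoidance, since $P \supseteq \bigcap P' \supseteq \prod P'$ would force $P \supseteq P'$ for some $j$, contradicting $P \subsetneq P'$). For the ``if'' direction, I would again pass to $R/P$ and set $P^* := \bigcap_{P' \in \mathcal{M}} P'/P$, a radical ideal; I must show $P^* = 0$ whenever $\mathcal{M}$ is infinite. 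Suppose $P^* \ne 0$ and pick a nonzero $a \in P^*$. Then every $P' \in \mathcal{M}$ contains $a$, hence contains a minimal prime of $(a)$; since each $P'$ has height one, it must actually be a minimal prime of $(a)$. But $(a)$ has only finitely many minimal primes in a Noetherian ring, contradicting the infinitude of $\mathcal{M}$. Therefore $P^* = 0$, i.e. $P = \bigcap_{P' \in \mathcal{M}} P'$.

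The main obstacle in this argument is packaging the principal-ideal-theorem/prime-avoidance interplay cleanly so that the height-one primes produced are genuinely distinct and genuinely contained in $Q$; the finiteness of minimal primes over a principal ideal is what makes both directions work, so I would isolate that as the recurring engine. One subtlety to handle with care is the phrase ``height one in $Q$'' in part~\eqref{4lNgoA}: since all these primes lie between $P$ and $Q$ with $\height(P'/P)=1$, working in the localization $R_Q$ (equivalently, the domain $(R/P)_{Q/P}$) keeps everything inside the relevant interval and guarantees the minimal primes of a suitable principal ideal stay below $Q$.
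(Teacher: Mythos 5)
Your proposal is correct and follows essentially the same route as the paper: reduce modulo $P$ (and localize at $Q$ for part (a)), then combine Krull's principal ideal theorem with prime avoidance for (a), and use the finiteness of the set of minimal primes over an ideal in a Noetherian ring for (b). The only cosmetic differences are that the paper phrases (a) as ``$Q$ lies in the union of all height-one primes, so finiteness would force $Q$ into one of them,'' and in (b) works with $I=\bigcap_{P'\in\mathcal{M}}P'$ directly rather than with a principal ideal generated by a nonzero element of the intersection; your ``contains a product, hence contains a factor'' step is correct but is not prime avoidance.
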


\begin{proof}
  (a) By factoring out $P$ and localizing at $Q$ we may assume that
  $P$ is the zero ideal of a local domain $R$ with maximal ideal $Q$.
  We have to show that the set of height one primes of $R$ is
  infinite.  By Krull's principal theorem, every element $a \neq 0$ in
  $Q$ is contained in some height one prime $P'$.  So $Q$ is contained
  in the union of all height one primes of $R$.  If the number of
  these primes were finite, it would follow by the prime avoidance
  lemma that $Q$ is contained in one of them, contradicting the
  hypothesis $\height(Q) \ge 2$.  \par
           
  (b) Let $I = \cap_{P' \in \mathcal{M}} P'$.  If $P \neq I$, every
  prime $P'$ of $\mathcal{M}$ is a minimal prime over $I$.  Hence
  $\mathcal{M}$ is finite because $R$ is Noetherian.  Conversely, if
  $\mathcal{M}$ is finite, then $\mathcal{M}$ is the set of minimal
  primes over $I$.  This implies $\height(P) < \height(I)$, hence $P
  \neq I$.
\end{proof}

\begin{cor} \label{4cJacobson}%
  A Noetherian ring $R$ is a Jacobson ring if and only if for every
  prime $P$ with $\dim R/P = 1$ there exist infinitely many maximal
  ideals containing $P$.
\end{cor}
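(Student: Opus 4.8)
The plan is to deduce the corollary from \lref{4lNgo} together with the characterization, recorded just before that lemma, that a Noetherian ring $R$ is Jacobson if and only if every nonmaximal prime $P$ equals the intersection of the primes $P' \supset P$ with $\height(P'/P) = 1$. The pivotal observation is that when $\dim R/P = 1$ these ``height-one-over-$P$'' primes are \emph{exactly} the maximal ideals containing $P$: in the one-dimensional domain $R/P$ every nonzero prime has height one and, having nothing strictly above it, is maximal, while conversely every maximal ideal of $R/P$ is nonzero and hence of height one. Thus for such $P$, ``infinitely many maximal ideals contain $P$'' is literally ``infinitely many primes of height one lie over $P$'', and \lref{4lNgo}\eqref{4lNgoB} translates this infinitude into the intersection condition.

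For the forward direction, suppose $R$ is Jacobson and $\dim R/P = 1$. Then $P$ is nonmaximal, so by the Jacobson property $P = \bigcap\{M : M \text{ maximal}, M \supseteq P\}$. By the observation above these $M$ are precisely the primes of height one over $P$, so applying \lref{4lNgo}\eqref{4lNgoB} to this family $\mathcal{M}$, the equality $P = \bigcap_{P' \in \mathcal{M}} P'$ forces $\mathcal{M}$ to be infinite; that is, infinitely many maximal ideals contain $P$.

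For the reverse direction I would assume the stated infinitude condition and verify the height-one characterization for every nonmaximal prime $P$. Set $I := \bigcap\{P' : \height(P'/P) = 1\} \supseteq P$ and distinguish two cases. If $\dim R/P = 1$, the height-one-over-$P$ primes are the maximal ideals over $P$, which are infinite in number by hypothesis, so $I = P$ by \lref{4lNgo}\eqref{4lNgoB}. If $\dim R/P \ge 2$, choose $Q \supset P$ with $\height(Q/P) \ge 2$; \lref{4lNgo}\eqref{4lNgoA} then yields infinitely many primes of height one over $P$. Passing to the Noetherian domain $R/P$, each nonzero element $a$ lies in only finitely many height-one primes---namely the minimal primes over $(a)$, which have height one by Krull's principal ideal theorem and are finite in number---so no nonzero $a$ can lie in all of the infinitely many height-one primes; hence $I = P$ again.

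The main obstacle, and the point where the argument must be set up carefully, is the reverse direction for primes with $\dim R/P \ge 2$: the hypothesis supplies information only about primes $P$ with $\dim R/P = 1$, yet the characterization to be verified ranges over all nonmaximal primes. The resolution is to notice that this high-dimensional case requires no hypothesis at all: \lref{4lNgo}\eqref{4lNgoA} guarantees infinitely many height-one primes over $P$, and the elementary ``finite-membership'' fact (a nonzero element of a Noetherian domain sits in only finitely many height-one primes) then forces their intersection down to $P$. I would double-check that this membership count uses only that $R/P$ is a Noetherian domain together with Krull's principal ideal theorem, so that the two cases combine to give the full characterization and hence that $R$ is Jacobson.
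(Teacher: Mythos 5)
Your proof is correct and takes essentially the same route as the paper's: both reduce the Jacobson property to the primes $P$ with $\dim R/P = 1$ by using \lref{4lNgo} to dispose of the case $\dim R/P \ge 2$, and both identify the maximal ideals over such a $P$ with the primes of relative height one so that part (b) of \lref{4lNgo} converts the intersection condition into the infinitude of maximal ideals. The only (harmless) difference is that in the case $\dim R/P \ge 2$ you re-derive the ``infinitely many height-one primes intersect down to $P$'' half of part (b) by hand via the finite-membership argument, where simply citing that part of the lemma would suffice.
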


\begin{proof}
  By \lref{4lNgo}, every prime $P$ of a Noetherian ring $R$ with $\dim
  R/P \ge 2$ is the intersection of primes $P' \supset P$ with
  $\height(P'/P) = 1$. Therefore, $R$ is a Jacobson ring if and only
  if every prime $P$ with $\dim R/P = 1$ is the intersection of
  maximal primes.  By \lref{4lNgo}\eqref{4lNgoB}, this is equivalent
  to the condition that there exist infinitely many maximal ideals
  containing $P$.
\end{proof}

We use the above results to prove the following lemma which will play
a crucial role in our investigation on independent sequences over $R$.

\begin{lemma} \label{4lBoundary}
  Let~$a$ be an element of a Noetherian ring $R$ and set
  \[
  U_a := \left\{a^n (1 + a x) \mid n \in \NN_0, \ x \in R\right\}.
  \]
  Then the localization $U_a^{-1} R$ is a Jacobson ring.
\end{lemma}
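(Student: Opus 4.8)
The plan is to first pin down $\Spec U_a^{-1}R$ and then invoke the characterization of Noetherian Jacobson rings from \cref{4cJacobson}. I would begin by checking that $U_a$ is a multiplicative set: it contains $1 = a^0(1+a\cdot 0)$ and is closed under multiplication, since $a^n(1+ax)\cdot a^m(1+ay) = a^{n+m}\big(1 + a(x+y+axy)\big)$, so that $S := U_a^{-1}R$ is a well-defined Noetherian ring. The crucial bookkeeping is to describe which primes survive: a prime $P$ of $R$ satisfies $P \cap U_a = \emptyset$ if and only if $a \notin P$ and no element $1+ax$ lies in $P$. Passing to the domain $R/P$, the latter condition says exactly that the image $\bar a$ of $a$ is not invertible in $R/P$; equivalently, $a \notin P$ but $a$ lies in some maximal ideal containing $P$. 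So $\Spec S$ consists of the primes $P$ with $a \notin P$ for which $\bar a$ is a nonzero non-unit of $R/P$.

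By \cref{4cJacobson} it suffices to show that every prime $\mathfrak p$ of the Noetherian ring $S$ with $\dim S/\mathfrak p = 1$ lies below infinitely many maximal ideals of $S$. Writing $\mathfrak p = PS$, I would exploit the self-similarity of the construction: $S/\mathfrak p \cong U_b^{-1}(R/P)$, where $D := R/P$ is a Noetherian domain and $b := \bar a$ is a nonzero non-unit (the image of $U_a$ in $D$ is precisely $U_b$). Thus the problem reduces to the domain case: given a Noetherian domain $D$, a nonzero non-unit $b$, and $\dim U_b^{-1}D = 1$, one must produce infinitely many maximal ideals of $U_b^{-1}D$. From the spectrum description, since $(0)$ survives and the dimension is $1$, a surviving prime is maximal in the localization precisely when it has height one; hence the maximal ideals of $U_b^{-1}D$ are exactly the height-one primes $P'$ of $D$ with $b \notin P'$ that are contained in some maximal ideal of $D$ containing $b$.

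To manufacture infinitely many such $P'$, I would first use $\dim U_b^{-1}D = 1$ to obtain one surviving height-one prime $P'_0$, sitting inside a maximal ideal $\mathfrak m_0$ of $D$ with $b \in \mathfrak m_0$; then $\height(\mathfrak m_0) \ge 2$, because $(0) \subsetneq P'_0 \subsetneq \mathfrak m_0$ with $b \in \mathfrak m_0 \setminus P'_0$. Localizing at $\mathfrak m_0$ yields a Noetherian local domain $D_{\mathfrak m_0}$ of dimension $\ge 2$, whose height-one primes are exactly the height-one primes of $D$ contained in $\mathfrak m_0$. Now \lref{4lNgo}\eqref{4lNgoA}, applied with the zero ideal and the maximal ideal $\mathfrak m_0 D_{\mathfrak m_0}$, provides infinitely many height-one primes of $D_{\mathfrak m_0}$; only finitely many of them can contain $b$, since a height-one prime containing the nonzero element $b$ must be a minimal prime of $(b)$, of which there are finitely many. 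Each of the remaining infinitely many primes pulls back to a height-one prime $P' \subseteq \mathfrak m_0$ of $D$ with $b \notin P'$, hence to a distinct maximal ideal of $U_b^{-1}D$, completing the reduction. The main obstacle is really the first paragraph: getting the description of $\Spec S$ right and recognizing that passing to $S/\mathfrak p$ returns a localization of the same type, so that the geometric input of \lref{4lNgo} can be applied inside a single local domain; once that is set up, the counting of infinitely many primes is immediate.
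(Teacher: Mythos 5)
Your proof is correct and follows essentially the same route as the paper's: reduce via \cref{4cJacobson} to producing infinitely many maximal ideals over each prime of coheight one, manufacture a prime of relative height at least two containing $a$ (your $\mathfrak{m}_0$ plays the role of the paper's $Q \supseteq (P_1,a)$), apply \lref{4lNgo}\eqref{4lNgoA} to obtain infinitely many intermediate height-one primes, and discard the finitely many that meet $U_a$. The only deviations are minor: you pass to the quotient domain $D = R/P$ using the self-similarity $U_a^{-1}R/U_a^{-1}P \cong U_{\bar a}^{-1}(R/P)$ and rule out the non-surviving primes by noting they must be minimal primes of $(b)$, whereas the paper stays in $R$ and excludes them via \lref{4lNgo}\eqref{4lNgoB} together with the contradiction $1 \in (P',a) \subseteq Q$; both arguments are valid.
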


\begin{proof}
  We will use the inclusion-preserving bijection between the primes of
  $S := U_a^{-1}R$ and the primes $P$ of $R$ satisfying $U_a \cap P =
  \emptyset$.  Let $P$ be such a prime of $R$ with
  $\dim\left(S/U_a^{-1} P\right) = 1$. Then there exists a prime $P_1
  \supset P$ of $R$ with $\height(P_1/P) = 1$ and $U_a \cap P_1 =
  \emptyset$.  The latter condition implies $a \notin P_1$ and $1
  \notin (P_1,a)$. Let $Q$ be a prime of $R$ containing $(P_1,a)$.
  Then $\height(Q/P) \ge 2$.  By \lref{4lNgo}\eqref{4lNgoA}, the set
  \[
  \mathcal{M} := \left\{P' \in \Spec(R) \mid P \subset P' \subset Q, \
    \height(P'/P) = 1\right\}
  \]
  is infinite. Consider the set $\mathcal{N} := \{P' \in \mathcal{M}
  \mid U_a \cap P' \neq \emptyset\}$.  If $\mathcal{N}$ is infinite,
  $P = \bigcap_{P' \in \mathcal{N}} P'$ by \lref{4lNgo}\eqref{4lNgoB}.
  Since $U_a \cap P = \emptyset$, $a \not\in P$. Therefore, there
  exists a prime $P' \in \mathcal{N}$ such that $a \not\in P'$.  Since
  $U_a \cap P' \neq \emptyset$, this implies $1 + ax \in P'$ for some
  $x \in R$. Hence $1 \in (P',a) \subseteq Q$, a contradiction.  So
  $\mathcal{N}$ must be finite, and we can conclude that $\mathcal{M}
  \setminus \mathcal{N}$ is infinite.  By the definition of
  $\mathcal{M}$ and $\mathcal{N}$, the set of primes $P' \supset P$
  with $\height(P'/P) = 1$ and $U_a \cap P' = \emptyset$ is infinite.
  Since this set corresponds to the set of maximal ideals of $S$
  containing $U_a^{-1} P$, the assertion follows from
  \cref{4cJacobson}.
\end{proof}

\begin{remark*}
  The localization $U_a^{-1} R$ from \lref{4lBoundary} was already
  used by Coquand and Lombardi to give a short proof for the fact that
  the Krull dimension of a polynomial ring over a field is equal to
  the number of variables~\cite{Coquand-Lombardi:05}.  They called it
  the boundary of~$a$ in $R$.
\end{remark*}

Now we are going to give a characterization of the Krull dimension of
algebras over a Jacobson ring $R$ by means of independent elements
over $R$ with respect to an arbitrary monomial preorder~$\prec$. First
we need to consider the case where~$\prec$ is the lexicographic order
with $x_i > x_{i+1}$ for all~$i$.
 
We call an $R$-algebra {\em subfinite} if it is a subalgebra of a
finitely generated $R$-algebra. 
A subfinite algebra needs not to be finitely generated.

\begin{theorem} \label{4tLex}%
  Let $A$ be a subfinite algebra over a Noetherian Jacobson ring $R$
  and let~$s$ be a positive integer. There exists a sequence $a_1
  \upto a_s \in A$ that is independent over $R$ with respect to the
  lexicographic order if and only if $s \le \dim A$.
\end{theorem}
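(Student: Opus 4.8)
The plan is to prove the two directions of the equivalence separately, reducing the statement about the subfinite algebra $A$ to the previously established Theorem~\ref{3tPreorder} (equivalently Theorem~\ref{monomial order 1}) about the Krull dimension of a Noetherian ring. The essential new ingredient is the Jacobson hypothesis, which must be exploited precisely because the previous theorems are about finitely generated polynomial rings over $R$, whereas $A$ may be a badly behaved infinitely generated subalgebra.

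First I would handle the easier implication, namely that an independent sequence $a_1 \upto a_s$ over $R$ with respect to the lexicographic order forces $s \le \dim A$. The sequence lives inside $A$, and independence over $R$ is a statement about polynomials in $R[X]$ vanishing at the $a_i$; the subalgebra $B := R[a_1 \upto a_s] \subseteq A$ is a finitely generated $R$-algebra, hence Noetherian, and $\dim B \le \dim A$. The goal would be to show that the $a_i$ are also independent as a sequence in $B$ with respect to~$\prec$ (which is immediate, since the defining ideal $Q$ is computed the same way), and then to invoke the contrapositive of Theorem~\ref{monomial order 1}\eqref{Order1B}: if $s$ exceeded $\dim B$, the sequence would be dependent. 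The subtlety here is that Theorem~\ref{monomial order 1} characterizes $\dim B$ via existence/nonexistence of independent sequences in $B$ as a ring, not as an $R$-algebra; but for lexicographic dependence, invertibility of a coefficient in $R$ implies invertibility in $B$, so independence over $R$ is at least as strong a condition, giving $s \le \dim B \le \dim A$.

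For the harder direction, assuming $s \le \dim A$ I must produce an independent sequence of length $s$. I would first reduce to the case of a finitely generated algebra $A'$ with $\dim A' \ge s$ containing the relevant data, using subfiniteness. The plan is to find a chain of primes of length $s$ in $A'$ and to construct the sequence by choosing elements that witness a suitable height-$s$ configuration, in the spirit of Proposition~\ref{parameter}, but transported to the relative setting over $R$. The crucial device is the boundary localization $U_a^{-1}R$ of \lref{4lBoundary}, which manufactures a Jacobson ring from a single element; I expect the argument to proceed by an inductive descent on $s$, at each stage choosing an element $a_i$ and passing to a localization of the form $U_{a_i}^{-1}$ so that the Jacobson property is preserved and a generic maximal ideal can be used to certify that no coefficient of $\ini_\prec(f)$ becomes invertible. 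The lexicographic order is singled out precisely because it allows one to peel off one variable at a time, matching the inductive construction of the sequence.

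The main obstacle will be the harder direction, and specifically the interplay between invertibility in $R$ and the geometry of $\Spec A$: I must guarantee that a relation $f(a_1 \upto a_s) = 0$ with $\ini_\prec(f)$ having a unit coefficient cannot exist, and the Jacobson hypothesis is what rules this out by ensuring that primes are intersections of maximal ideals, so that a coefficient failing to lie in \emph{every} maximal ideal would force an actual contradiction at some closed point. Concretely, I anticipate using \cref{4cJacobson} and \lref{4lBoundary} to show that after localizing, enough maximal ideals survive to detect dependence, and then lifting the conclusion back to $A$. Making the induction close cleanly—keeping the ring Noetherian and Jacobson while controlling the dimension drop at each step—is where the technical weight of the proof will lie, and it is the reason the theorem is stated first for the lexicographic order before being extended to arbitrary monomial preorders.
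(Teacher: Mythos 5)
You have the two directions the wrong way round, and the argument you give for the direction you call ``easy'' is the one that carries all the weight of the theorem --- and it is circular. To show that an independent sequence of length $s$ forces $s \le \dim A$, you set $B := R[a_1 \upto a_s]$ and assert $\dim B \le \dim A$. But a subalgebra of a subfinite algebra can have \emph{larger} dimension than the ambient algebra when the base is not Jacobson (the paper's own example: $R$ a one-dimensional local domain, $A = R[a^{-1}]$ with $\dim A = 0 < 1 = \dim R$); the inequality $\dim B \le \dim A$ for subfinite algebras over a Jacobson ring is exactly \cref{4cAB}, which is deduced \emph{from} \tref{4tLex}, not available before it. Even granting that inequality, your appeal to \tref{monomial order 1}\eqref{Order1B} goes the wrong way: that theorem produces a polynomial $f \in B[X]$ whose initial coefficient is a unit of $B$, i.e.\ it shows the sequence is dependent \emph{in the ring} $B$; it does not produce an $f \in R[X]$ with initial coefficient a unit of $R$, so it does not contradict independence \emph{over} $R$. (The one-line implication is the opposite one: dependence over $R$ implies dependence in $B$.)

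The direction you call hard is in fact the short one: if $s \le \dim A$, Lombardi's theorem applied to the ring $A$ itself yields a sequence independent in $A$ with respect to the lexicographic order, and since a unit of $R$ is a unit of $A$, such a sequence is a fortiori independent over $R$; no boundary localization is needed there. The boundary localization of \lref{4lBoundary} is the engine of the paper's proof of the \emph{other} direction: for $A$ finitely generated one inducts on $s$, localizing at $U = \{f(a_s) \mid f \in T\}$ where $T$ is the set of univariate polynomials with unit initial coefficient, using the Jacobson property to show $\dim U^{-1}A < s-1$ (a height-$(s-1)$ prime $P$ avoiding $U$ would be maximal with $A/P$ finite over $R/(R\cap P)$, forcing $1 - a_s g(a_s) \in U \cap P$), and then applying the induction hypothesis over the new Jacobson base $R' = U^{-1}R[a_s]$ before clearing denominators and lifting coefficients from $R[a_s]$ to $R[x_s]$ --- this last step is where the lexicographic order is actually used. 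Finally, the passage from finitely generated to subfinite $A$ needs its own argument (reduction to the domain case via minimal primes, then Giral's theorem that some $A[a^{-1}]$ is finitely generated), which your sketch does not address. As written, the proposal establishes neither direction.
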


\begin{proof}
  If $s \le \dim A$, Lombardi~\cite{Lombardi:2002} (which does require $A$ to be Noetherian) tells us
  that there exists a sequence of length~$s$ that is independent over
  $A$ with respect to the lexicographic order. Therefore it is also
  independent over $R$.
  
  The next step is to prove the converse under the hypothesis that $A$
  is finitely generated. We use induction on~$s$. We may assume that
  $A \ne \{0\}$, $\dim A < \infty$, and $s = \dim A + 1$. We have to
  show that every sequence $a_1 \upto a_s \in A$ is dependent over $R$
  with respect to the lexicographic order. \par
 
  Let $T$ be the set of univariate polynomials~$f \in R[x]$ whose
  initial term $\ini(f)$ has coefficient~$1$. Since $T$ is
  multiplicatively closed, so is the set
  \[
  U := \left\{f(a_s) \mid f \in T\right\} \subseteq A.
  \]
  Let $A' := U^{-1} A$. If $\dim A' = s-1$, then $A$ has a height
  $s-1$ prime $P$ with $U \cap P = \emptyset$. This prime must be
  maximal because $\dim A = s - 1$.  Since $R$ is a Jacobson ring,
  $A/P$ is a finite field extension of $R/(R \cap P)$
  \cite[Theorem~4.19]{eis}.  Since $U \cap P = \emptyset$, $a_s
  \not\in P$. These facts imply that there exists $g \in R[x]$ such
  that $a_s g(a_s) - 1 \in P$.  But $1 - x g \in T$, so $1 - a_s
  g(a_s) \in U \cap P$, a contradiction. So we can conclude that $\dim
  A' < s-1$. \par

  If $A' = \{0\}$ (which must happen if $s = 1$), then $0 \in U$,
  hence there exists $f \in T$ with $f(a_s) = 0$.  So the sequence
  $a_1 \upto a_s$ is dependent over $R$ with respect to the
  lexicographic order. Having dealt with this case, we may assume $A'
  \ne \{0\}$.  Let $R' := U^{-1} R[a_s]$. Then $A'$ is finitely
  generated as an $R'$-algebra. By \lref{4lBoundary}, $R'$ is a
  Jacobson ring. So we may apply the induction hypothesis to $A'$.
  This tells us that the sequence $a_1 \upto a_{s-1}$ (as elements of
  $A'$) is dependent over $R'$ with respect to the lexicographic
  order. Thus, there exists a polynomial $g \in R'[x_1 \upto x_{s-1}]$
  vanishing at $a_1 \upto a_{s-1}$ such that the coefficient of
  $\ini_{\lex}(g)$ is invertible in $R'$. We may assume that this
  coefficient is~$1$.  By the definition of $A'$ there exists $c_0 \in
  R$ such that $c_0 g \in R[a_s][x_1 \upto x_{s-1}]$ and $(c_0 g)(a_1
  \upto a_{s-1}) = 0$ (as an element of $A$). Replacing every
  coefficient $c \in R[a_s]$ of the polynomial $c_0g$ by a polynomial
  $c^* \in R[x_s]$ with $c^*(a_s) = c$, we obtain a polynomial $g^*
  \in R[x_1 \upto x_s]$ vanishing at $a_1 \upto a_s$.  Since $c_0 \in
  U$, we may choose $c_0^* \in T$. Clearly, the coefficient of
  $\ini_{\lex}(g^*)$ is equal to the coefficient of $\ini(c_0^*)$,
  which is~$1$. This shows that $a_1 \upto a_s$ are dependent over $R$
  with respect to the lexicographic order. \par

  Now we deal with the case $A$ is a subalgebra of a finitely
  generated $R$-algebra $B$.  Let $P_1 \upto P_n \in \Spec(B)$ be the
  minimal primes of $B$, and assume that we can show that for
  every~$i$, the images of $a_1 \upto a_s$ in $A/(A \cap P_i)$ are
  dependent over $R$ with respect to the lexicographic order. Then for
  every~$i$, there exists a polynomial $f_i \in R[x_1 \upto x_s]$ with
  $f_i(a_1 \upto a_s) \in P_i$ such that the coefficient of
  $\ini_{\lex}(f_i)$ is invertible. Since $\prod_{i=1}^n f_i(a_1 \upto
  a_s)$ lies in the nilradical of $B$, there exists~$k$ such the
  polynomial $f := \prod_{i=1}^n f_i^k$ vanishes at $a_1 \upto a_s$.
  Since the coefficient of $\ini_{\lex}(f)$ is also invertible, this
  shows that $a_1 \upto a_s$ are dependent over $R$ with respect to
  the lexicographic order. So we may assume that $B$ is an integral
  domain.  By Giral~\cite[Proposition~2.1(b)]{Giral:81}
  (or~\cite[Exercise~10.3]{Kemper.Comalg}), there exists a nonzero $a
  \in A$ such that $A[a^{-1}]$ is a finitely generated
  $R$-algebra. Since $\dim A[a^{-1}] \le \dim A < s$, the sequence
  $a_1 \upto a_s$ is dependent over $R$ with respect to the
  lexicographic order. This completes the proof.
\end{proof}

One can use Theorem \ref{4tLex} to prove the existence of nontrivial
relations between algebraic numbers (i.e., elements of an algebraic
closure of $\QQ$).

\begin{ex} \label{exNumber}%
  Let~$a$ and~$b$ be two nonzero algebraic numbers. There exists $d
  \in \ZZ \setminus \{0\}$ such that~$a$ and~$b$ are integral over
  $\ZZ[d^{-1}]$. So $A := \ZZ\left[a,b,d^{-1}\right]$ has Krull
  dimension~$1$. By Theorem \ref{4tLex}, there is a polynomial $f \in
  \ZZ[x_1,x_2]$ vanishing at $a,b$ such that the coefficient of
  $\ini_{\lex}(f)$ is~$1$.  Let $\ini_{\lex}(f) = x_1^m x_2^n$. Then
  all monomials of~$f$ are divisible by $x_1^m$. Hence we may assume
  $m = 0$. Thus,
  \[
  b^n = a \cdot g(a,b) + b^{n+1} \cdot h(a,b)
  \]
  for some $g,h \in \ZZ[x_1,x_2]$. It is not clear how the existence
  of such a relation follows directly from properties of algebraic
  numbers. In the case that $\ZZ[a,b]$ is a Dedekind ring, we derived
  such a relation in \exref{2exZ}.
\end{ex}

To the best of our knowledge, the following immediate consequence of
\tref{4tLex} is new even for finitely generated algebras.

\begin{cor} \label{4cAB}%
  Let $R$ be a Noetherian Jacobson ring. If $A \subset B$ are
  subfinite $R$-algebras, then
  \[
  \dim A \le \dim B.
  \]
\end{cor}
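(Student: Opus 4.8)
The plan is to deduce this directly from \tref{4tLex}, which identifies $\dim$ of a subfinite algebra over $R$ with the supremum of the lengths of sequences that are independent over $R$ with respect to the lexicographic order. The whole point is that this notion of independence is \emph{intrinsic} to the sequence and the base ring $R$: it refers only to polynomials $f \in R[X]$ vanishing at the sequence and to the invertibility of coefficients of $\ini_{\lex}(f)$ \emph{in $R$}. Neither of these ingredients changes when we enlarge the ambient algebra, so independence should transfer verbatim along the inclusion $A \subset B$. Since both $A$ and $B$ are subfinite over the Noetherian Jacobson ring $R$, \tref{4tLex} is available for each of them, and the corollary will fall out by comparing the two supremum characterizations.

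To carry this out, I would fix an arbitrary positive integer $s$ with $s \le \dim A$. By \tref{4tLex} applied to $A$, there exists a sequence $a_1 \upto a_s \in A$ that is independent over $R$ with respect to the lexicographic order. I then view $a_1 \upto a_s$ as elements of $B$. The key verification is that this sequence remains independent over $R$ when regarded in $B$: for any $f \in R[x_1 \upto x_s]$, the value $f(a_1 \upto a_s)$ computed in $A$ coincides with the value computed in $B$ because $A \hookrightarrow B$ is an inclusion of $R$-algebras, so $f$ vanishes at the sequence in $A$ if and only if it vanishes at the sequence in $B$. As the invertibility of the coefficients of $\ini_{\lex}(f)$ is tested in $R$ and is thus unaffected, the set of witnessing polynomials is literally the same in both cases. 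Hence $a_1 \upto a_s$ is independent over $R$ in $B$ as well.

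Now I would apply \tref{4tLex} in the other direction to the subfinite $R$-algebra $B$: the existence of an independent sequence of length $s$ over $R$ forces $s \le \dim B$. Since $s$ was an arbitrary positive integer bounded by $\dim A$, this yields $\dim A \le \dim B$. The case $\dim A = \infty$ is covered by the same argument, as it produces independent sequences of every finite length in $B$ and hence $\dim B = \infty$.

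Honestly, I do not expect a genuine obstacle here; all the substance has already been invested in \tref{4tLex}, and the corollary is a formal consequence. The only point that deserves explicit mention, rather than real work, is the transfer of independence along $A \subset B$, and this is immediate from the definition once one observes that both the vanishing condition and the invertibility condition are insensitive to the ambient algebra.
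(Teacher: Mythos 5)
Your proof is correct and is precisely the argument the paper intends: the corollary is stated as an immediate consequence of \tref{4tLex}, and your observation that independence over $R$ with respect to the lexicographic order is intrinsic to the sequence and to $R$ (hence transfers along $A \subset B$) is exactly the step that makes it immediate. Nothing to add.
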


Now we turn to arbitrary monomial preorders and prove the main result
of this section. The proof relies on \cref{4cAB}.

\begin{theorem} \label{4tMain}%
  Let $A$ be a subfinite algebra over a Noetherian Jacobson ring $R$
  and let~$s$ be a positive integer.
  \begin{enumerate}
  \item \label{4tMainA} If $s \le \dim A$, there
    exists a sequence $a_1 \upto a_s \in A$ that is independent over
    $R$ with respect to every monomial preorder.
  \item \label{4tMainB} If $s > \dim A$, every sequence $a_1 \upto
    a_s \in A$ is dependent over $R$ with respect to every monomial
    preorder.
  \end{enumerate}
\end{theorem}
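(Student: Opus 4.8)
The plan is to deduce Theorem~\ref{4tMain} from the lexicographic case \tref{4tLex}, the approximation lemma \lref{lCone}, and the monotonicity result \cref{4cAB}, mirroring the structure of the proofs of \tref{monomial order 1} and \tref{3tPreorder} but now working over the base ring $R$ rather than inside a single ring. The two parts are handled separately.

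For part~\eqref{4tMainA}, suppose $s \le \dim A$. By \tref{4tLex} there is a sequence $a_1 \upto a_s \in A$ that is independent over $R$ with respect to the lexicographic order. I claim this same sequence is independent over $R$ with respect to \emph{every} monomial preorder. The argument should run parallel to \pref{3pEquiv}\eqref{3pEquivB}: given a monomial preorder $\prec$ and a polynomial $f \in R[X]$ vanishing at $a_1 \upto a_s$, I would refine $\prec$ to a monomial order $\prec^*$ via \lref{3lRefine}; then $\ini_{\prec^*}(f)$ is a single term that is minimal with respect to $\prec^*$ and hence also minimal with respect to $\prec$, so it appears as one of the terms of $\ini_\prec(f)$. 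Thus if $\ini_\prec(f)$ had an invertible coefficient one could arrange, after possibly passing to a suitable refinement adapted to that term, a contradiction with lexicographic independence. The subtle point here is that lexicographic independence does not immediately give independence for an arbitrary monomial order, so I expect to need the full strength of the reduction techniques (\pref{change} and \cref{Noetherian}) transplanted to the relative setting over $R$, or else a direct argument showing that the distinguished invertible coefficient survives into the lexicographic initial form.

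For part~\eqref{4tMainB}, suppose $s > \dim A$ and let $a_1 \upto a_s \in A$ be arbitrary; I must show the sequence is dependent over $R$ with respect to every monomial preorder $\prec$. The natural route is to fix $\prec$, pass to the finitely generated $R$-subalgebra $B := R[a_1 \upto a_s] \subseteq A$, and observe $\dim B \le \dim A < s$ by \cref{4cAB}. Working inside $B$, I would apply the dependence direction: since $s > \dim B$, \tref{4tLex} (in its dependence form, i.e. the finitely generated converse already established there) yields a polynomial vanishing at $a_1 \upto a_s$ whose lexicographic initial term has coefficient $1$, hence invertible. To upgrade from the lexicographic order to the given preorder $\prec$, I would invoke \lref{lCone}: only finitely many monomials are involved in the relevant polynomials, so there is a weight sequence $\w$ with $\ini_\prec = \ini_\w$ on this finite set, reducing matters to a weighted preorder, and then argue as in \pref{3pEquiv}\eqref{3pEquivA} that dependence with respect to some monomial order forces dependence with respect to $\prec$.

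The main obstacle I anticipate is the same asymmetry already visible in Sections~2 and~3: the clean equivalence between independence for all monomial preorders and independence for all weight sequences (respectively dependence for all monomial orders) does not hold preorder-by-preorder, so I cannot simply quote \pref{3pEquiv} verbatim over the base ring $R$. In particular, for~\eqref{4tMainA} I need to be careful that the witness produced by \tref{4tLex} is genuinely robust across all preorders, which likely requires combining \cref{4cAB} with the preorder-to-monomial-order refinement rather than with weight approximation. I expect that once \tref{4tLex} supplies the lexicographic case and \cref{4cAB} supplies the crucial inequality $\dim B \le \dim A$ for the passage to the finitely generated subalgebra, the remaining steps are formal manipulations of initial forms entirely analogous to those carried out in the proof of \tref{3tPreorder}.
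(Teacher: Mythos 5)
There are genuine gaps in both parts, and in each case the missing ingredient is one that the paper's own proof supplies by a different route.

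For part~\eqref{4tMainA}, your plan is to take the lex-independent sequence produced by \tref{4tLex} and upgrade it to independence for every monomial preorder. This cannot work as described: independence with respect to one particular monomial order does not transfer to other orders or preorders (this is exactly the asymmetry of Examples~\ref{not permute} and~\ref{2exZ}), and the refinement argument of \lref{3lRefine}/\pref{3pEquiv}\eqref{3pEquivB} runs in the opposite direction --- it shows that $\prec$-independence passes to a refinement $\prec^*$ of $\prec$, not that lex-independence passes to an arbitrary $\prec$. You sense the problem yourself, but the fix is not a transplanted version of \pref{change}; the paper simply does not use \tref{4tLex} here. Instead it reduces (via minimal primes of the ambient finitely generated algebra $B$ and Giral's theorem) to the situation where $A[a^{-1}]$ is finitely generated with $\dim A[a^{-1}] = \dim A$ --- the inequality $\dim A \le \dim A[a^{-1}]$ being exactly where \cref{4cAB} enters --- and then chooses $a_1 \upto a_s \in A$ so that $(a_1 \upto a_s)A[a^{-1}]$ has a minimal prime of height $s$. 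By \pref{parameter} such a sequence is weighted independent for \emph{every} weight sequence as elements of the Noetherian ring $A[a^{-1}]$, and \pref{3pEquiv}\eqref{3pEquivA} then gives independence for every preorder; independence over the ring $A[a^{-1}]$ is a fortiori independence over $R$. The height construction, not the lexicographic theorem, is what makes the witness robust across all preorders.

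For part~\eqref{4tMainB}, you correctly pass to $A' := R[a_1 \upto a_s]$ and use \cref{4cAB} to get $\dim A' < s$, but then you invoke the lex-dependence from \tref{4tLex} and assert that ``dependence with respect to some monomial order forces dependence with respect to $\prec$.'' That implication is false, and neither \lref{lCone} nor \pref{3pEquiv}\eqref{3pEquivA} delivers it: a polynomial whose \emph{lexicographic} initial term is invertible says nothing about its $\prec$-initial form. The paper instead applies \tref{3tPreorder}\eqref{3tPreorderB} to the Noetherian ring $A'$ \emph{for the given preorder} $\prec$, obtaining $f = \sum_i c_i t_i \in A'[x_1 \upto x_s]$ vanishing at the sequence with the coefficient of the minimal monomial $t_0$ equal to $1$. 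The essential step you are missing is the coefficient-lifting trick: replace each $c_i \in A' = R[a_1 \upto a_s]$ by a polynomial $c_i^* \in R[x_1 \upto x_s]$ evaluating to $c_i$, with $c_0^* = 1$; the resulting $f^* = \sum_i c_i^* t_i$ has coefficients in $R$, still vanishes at the sequence, and condition~\eqref{Preorder3} ($1 \prec m$ for $m \ne 1$) guarantees that $t_0$ survives as a term of $\ini_\prec(f^*)$ with coefficient $1$. Without this lifting from $A'$-coefficients to $R$-coefficients there is no way to convert dependence ``inside $A'$'' into dependence ``over $R$.''
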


\begin{proof}
  (a) Let $A$ be a subalgebra of a finitely generated $R$-algebra $B$.
  Since $\sqrt{0_A} = \sqrt{0_B} \cap A$ and since the nilradical is the intersection of all minimal primes,
  the set of the minimal primes of $A$ is contained in the set of prime ideals of the form $P\cap A$, 
where $P$ is a minimal prime  ideal of $B$.
 Therefore, there exists a minimal prime ideal $P$ of $B$ such that $\dim A/P \cap A = \dim A$.
 If $a_1,...,a_s \in A$ is independent over $R$ in $A/P \cap A$, then it is also independent over $R$ in $A$.
 Therefore, we may replace $A$ by $A/P \cap A$ and assume that $B$ is an integral domain.
 By Giral~\cite[Proposition~2.1(b)]{Giral:81}
  (or~\cite[Exercise~10.3]{Kemper.Comalg}), there exists a nonzero $a
  \in A$ such that $A[a^{-1}]$ is a finitely generated   $R$-algebra. 
Then  $\dim A[a^{-1}] \le \dim A$. By \cref{4cAB}, $\dim A \le \dim A[a^{-1}]$.
Hence $s \le \dim A = \dim A[a^{-1}]$. Choose $a_1,...,a_s \in A$ such that
$(a_1,...,a_s)A[a^{-1}]$ has a minimal prime ideal of height $s$. 
Then $a_1,...,a_s$ is independent over $R$ with respect to every monomial preorder by 
 \pref{parameter} and  \tref{3pEquiv}\eqref{3pEquivA}. \par
(b) Let $A' := R[a_1 \upto a_s] \subseteq A$.  By
  \cref{4cAB}, $\dim A' \le \dim A$.  So
  \tref{3tPreorder}\eqref{3tPreorderB} yields for every monomial
  preorder~$\prec$ a polynomial $f \in A'[x_1 \upto x_s]$ vanishing at
  $a_1 \upto a_s$ such that $\ini_\prec(f)$ has an invertible
  coefficient $c_0$. We may assume $c_0 = 1$. Write $f = \sum_{i=0}^n
  c_i t_i$ with $c_i \in A'$ and $t_i$ pairwise different monomials
  such that $t_0$ is minimal among the $t_i$. Choose polynomials
  $c^*_i \in R[x_1 \upto x_s]$ with $c_i^*(a_1 \upto a_s) = c_i$ and
  $c_0^* = 1$. Set set $f^* = \sum_{i=0}^n c_i^* t_i$. Then $f^*$ is a
  polynomial of $R[x_1 \upto x_s]$ vanishing at $a_1 \upto a_s$. From
  the compatibility of monomial preorders with multiplication we
  conclude that $t_0$ is a term of $\ini_\prec(f^*)$. This shows that
  the sequence $a_1 \upto a_s$ is dependent over $R$ with respect
  to~$\prec$.
\end{proof}

\tref{4tMain} generalizes a result of Giral~\cite{Giral:81} which says
that the dimension of a subfinite algebra over a field is equal to its
transcendence degree. \par

As a consequence of the above results, we give a characterization of Jacobson rings,
 which implies that the hypothesis that $R$ is a
Jacobson ring cannot be dropped from \cref{4cAB} and \tref{4tMain}.

\begin{cor} \label{3pConverse} For a Noetherian ring $R$, the
  following statements are equivalent:
  \begin{enumerate}
  \item \label{3pConverseA} $R$ is a Jacobson ring.
  \item \label{3pConverseB} For every subfinite $R$-algebra $A$ and
    every monomial preorder, $\dim A$ is the supremum of the length of independent
    sequences over $R$ in $A$.
  \item \label{3pConverseC} If $A \subseteq B$ is a pair of subfinite
    $R$-algebras, then $\dim A \le \dim B$.
  \end{enumerate}
\end{cor}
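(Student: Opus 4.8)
The plan is to prove the three-way equivalence \eqref{3pConverseA}$\Rightarrow$\eqref{3pConverseB}$\Rightarrow$\eqref{3pConverseC}$\Rightarrow$\eqref{3pConverseA}, since the forward implications are already essentially established by the machinery developed earlier in the paper. The implication \eqref{3pConverseA}$\Rightarrow$\eqref{3pConverseB} is exactly the content of \tref{4tMain}: if $R$ is a Noetherian Jacobson ring and $A$ is a subfinite $R$-algebra, then parts \eqref{4tMainA} and \eqref{4tMainB} together say precisely that $\dim A$ is the supremum of the length of independent sequences over $R$ with respect to any monomial preorder. The implication \eqref{3pConverseB}$\Rightarrow$\eqref{3pConverseC} is a short deduction: given $A \subseteq B$ a pair of subfinite $R$-algebras and a monomial preorder~$\prec$, any sequence in $A$ that is independent over $R$ is a sequence in $B$ that is independent over $R$ (the defining condition involves only polynomials over $R$ vanishing at the sequence, and this condition is identical whether we view the $a_i$ in $A$ or in $B$). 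Hence the supremum computing $\dim A$ is bounded by the supremum computing $\dim B$, giving $\dim A \le \dim B$.

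The real work lies in the converse \eqref{3pConverseC}$\Rightarrow$\eqref{3pConverseA}, which shows the Jacobson hypothesis is necessary. I would prove the contrapositive: assuming $R$ is \emph{not} a Jacobson ring, I would construct a pair of subfinite $R$-algebras $A \subseteq B$ with $\dim A > \dim B$. By \cref{4cJacobson}, if $R$ is not Jacobson there exists a prime $P$ with $\dim R/P = 1$ that is contained in only finitely many maximal ideals. Factoring out $P$, I may assume $R$ is a one-dimensional Noetherian domain with only finitely many maximal ideals; localizing at the multiplicative set avoiding those maximal ideals, or better, inverting a suitable element, I can arrange a semilocal situation. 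The guiding example is the one already given in the excerpt just before \tref{4tLex}: for a one-dimensional local domain $R$ with $a \ne 0$ in the maximal ideal, the algebra $R[a^{-1}]$ has dimension $0$ while containing the dimension-raising localization. The plan is to exploit the same phenomenon: take $a$ lying in all of the finitely many maximal ideals of $R$ (possible since they are finite in number, by prime avoidance one can even pick $a$ in their intersection but in no minimal prime), so that inverting $a$ kills every maximal ideal and forces $\dim R[a^{-1}] = 0$, while $R$ itself has dimension $1$. Then $B := R[a^{-1}]$ and $A := R$ give $A \subseteq B$ with $\dim A = 1 > 0 = \dim B$, violating \eqref{3pConverseC}.

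The main obstacle is ensuring the constructed element $a$ simultaneously lies in every maximal ideal yet avoids the minimal primes, so that $R[a^{-1}]$ loses all maximal ideals (hence has dimension zero) while $R$ retains its one-dimensional chain. Concretely, after reducing to the semilocal one-dimensional domain $R$ with maximal ideals $\m_1 \upto \m_k$, I would choose $a \in \m_1 \cap \cdots \cap \m_k$ with $a \ne 0$; this is possible because the intersection is a nonzero ideal in a domain. Inverting $a$ sends each $\m_i$ to the unit ideal, and since every nonzero prime of a one-dimensional domain is maximal and hence equals some $\m_i$, the only surviving prime in $R[a^{-1}]$ is the zero ideal, giving $\dim R[a^{-1}] = 0$. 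Both $R$ and $R[a^{-1}]$ are evidently subfinite $R$-algebras (indeed finitely generated), so the pair $A = R \subseteq B = R[a^{-1}]$ exhibits the required failure of \eqref{3pConverseC}. The delicate point to verify carefully is that the reduction steps (factoring out $P$, then localizing to reach the finitely-many-maximal-ideals situation while preserving subfiniteness over the original $R$) genuinely produce a counterexample pair over $R$ and not merely over the quotient; tracking the algebra structure through these reductions is where I expect the argument to require the most care.
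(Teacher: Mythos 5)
Your proposal is correct, and its overall architecture coincides with the paper's: the cycle \eqref{3pConverseA}$\Rightarrow$\eqref{3pConverseB}$\Rightarrow$\eqref{3pConverseC}$\Rightarrow$\eqref{3pConverseA}, with the first implication quoted from \tref{4tMain}, the second being the trivial observation that independence over $R$ of a sequence in $A$ is unaffected by viewing it in $B$, and the third proved contrapositively by exhibiting a pair $A \subseteq B$ of subfinite $R$-algebras with $\dim A = 1 > 0 = \dim B$. The one genuine difference is in how the counterexample is obtained. The paper simply cites Eisenbud's Lemma~4.20, which directly supplies a nonmaximal prime $P$ and a nonzero $b \in R/P$ with $(R/P)[b^{-1}]$ a field. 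You instead derive this from the paper's own \cref{4cJacobson}: a non-Jacobson $R$ has a prime $P$ with $\dim R/P = 1$ lying under only finitely many maximal ideals, so $R/P$ is a semilocal one-dimensional domain, and inverting a nonzero element $a$ of the (nonzero, since it contains the product of the finitely many nonzero maximal ideals) Jacobson radical kills every nonzero prime, making $(R/P)[a^{-1}]$ a field. This buys self-containedness at the cost of essentially reproving a special case of Eisenbud's lemma; both yield literally the same pair $A = R/P \subseteq B = A[a^{-1}]$. Your closing worry about the reduction is unfounded: $R/P$ and $(R/P)[a^{-1}]$ are finitely generated algebras over the original $R$ (quotients and localizations at one element preserve finite generation), so they are subfinite $R$-algebras and the pair does violate \eqref{3pConverseC} over $R$ itself; no further localization step is needed beyond factoring out $P$.
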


\begin{proof}
  The only implication that requires a proof is
  that~\eqref{3pConverseC} implies~\eqref{3pConverseA}. But if $R$ is
  not a Jacobson ring, then by \cite[Lemma~4.20]{eis}, $R$ has a
  nonmaximal prime ideal $P$ such that $A := R/P$ contains a nonzero
  element~$b$ for which $B := A[b^{-1}]$ is a field. So
  \eqref{3pConverseC} fails to hold.
\end{proof}

\end{document}